\numberwithin{equation}{section}
\newtheorem{theorem}{Theorem}[section]
\newtheorem{lemma}{Lemma}[section]
\newtheorem{corollary}{Corollary}[section]
\newtheorem{proposition}{Proposition}[section]
\theoremstyle{definition}
\newtheorem{definition}{Definition}[section]
\newtheorem{remark}{Remark}[section]
\newtheorem{example}{Example}[section]
\theoremstyle{remark}
\date{}
\begin{document}

\title{On $U(n)$-invariant strongly convex complex Finsler metrics}
\author{Kun Wang(wangkunmath@126.com)\\
School of Mathematical Sciences, Xiamen
University\\ Xiamen 361005, China\and
Hongchuan Xia (xhc@xynu.edu.cn)\\
College of Mathematics and Statistics,
Xinyang Normal University\\
Xinyang 464000, China\and
Chunping Zhong (zcp@xmu.edu.cn)
\\
School of Mathematical Sciences, Xiamen
University\\ Xiamen 361005, China
 }

\date{}
\maketitle
\begin{abstract}
In this paper, we obtain a necessary and sufficient condition for a $U(n)$-invariant complex Finsler metric $F$ on domains in $\mathbb{C}^n$ to be  strongly convex, which also makes it possible to investigate relationship between real and complex Finsler geometry via concrete and computable examples. We prove a rigid theorem which states  that
a $U(n)$-invariant strongly convex complex Finsler metric $F$ is a real Berwald metric if and only if $F$ comes from a $U(n)$-invariant Hermitian metric. We give a characterization of $U(n)$-invariant weakly complex Berwald metrics with vanishing holomorphic sectional curvature and obtain an explicit formula for holomorphic curvature of $U(n)$-invariant strongly pseudoconvex complex Finsler metric. Finally, we  prove that the real geodesics of some $U(n)$-invariant complex Finsler metric restricted on the unit sphere $\pmb{S}^{2n-1}\subset\mathbb{C}^n$ share a specific property as that of the complex Wrona metric on $\mathbb{C}^n$.
\end{abstract}
\textbf{Keywords:} $U(n)$-invariant complex Finsler metric, strongly convex, real Berwald metric, holomorphic curvature.\\
\textbf{MSC(2010):}  53C60, 53C40.\\

\section{Introduction and main results}
As is well know, a Hermitian metric on a complex manifold $M$ is a Riemannian metric which is compatible with the complex structure $J$ of $M$.
In Finsler geometry (especially when the metric is not quadratic), however, real and complex Finsler geometry are not as tightly related as that of Riemannian and Hermitian geometry. Usually the differential geometry of real Finsler metrics requires the metrics  to be strongly convex along real tangent directions, while the differential geometry of complex Finsler metrics only requires the metrics to be strongly pseudoconvex along complex tangent directions, we refer to \cite{BCS,Sh,AP} for more details.

 A real Finsler metric is not necessary reversible while a complex Finsler metric is always reversible. A strongly pseudoconvex complex Finsler metric on a complex manifold is  not necessarily a real Finsler metric, and vice versa. A complex Finsler metric on a complex manifold is called strongly convex if it is also a real Finsler metric \cite{AP}. The first fundamental example of complex Finsler metrics which are smooth outside the zero section of the holomorphic tangent bundle of a complex manifold is undoubtedly the Kobayashi and Carath$\acute{\mbox{e}}$odory metrics on bounded strictly convex domains with smooth boundaries in $\mathbb{C}^n$, where these two holomorphic invariant metrics coincide and are strongly convex weakly K\"ahler Finsler metrics with constant holomorphic sectional curvature $-4$ \cite{LE1}.  Even in this special case, however, we don't have the explicit formulae for the Kobayashi and Carath$\acute{\mbox{e}}$odory metrics.

 In \cite{WZ}, H.-H. Wu and F. Zheng gave a systematic study of $U(n)$-invariant K\"ahler metrics on $\mathbb{C}^n$ with positive bisectional or sectional curvatures, and proved that the set $\mathcal{M}_n$ of all complete $U(n)$-invariant K\"ahler metrics on $\mathbb{C}^n$ with positive bisectional curvatures is actually quite large.
 Prior to their paper \cite{WZ}, there are only three example in this direction which were given in Klembeck \cite{K}, and H.-D. Cao \cite{C1,C2}, all of which are $U(n)$-invariant K\"ahler metrics on $\mathbb{C}^n$.

In complex Finsler geometry, prior to the  article \cite{Zh1}, there is few methods to construct even strongly pseudoconvex complex Finsler metrics which are not Hermitian quadratic. A natural question in complex Finsler geometry one may ask is whether there are $U(n)$-invariant K\"ahler Finsler metrics (in the sense of M. Abate and Patrizio\cite{AP}) which are not Hermitian quadratic. In \cite{Zh1}, the author obtained the necessary and sufficient condition for a $U(n)$-invariant complex Finsler metric to be strongly pseudoconvex, and proved that among all $U(n)$-invariant strongly pseudoconvex complex Finsler metrics on domains $D\subseteq\mathbb{C}^n$ there is no K\"ahler Finsler metrics or complex Berwald metrics other than K\"ahler metrics or Hermitian metrics. But fortunately, we found that there are lots of $U(n)$-invariant weakly complex Berwald metrics in the sense of  \cite{Zh2} which are not Hermitian quadratic. The complex geodesic spray coefficients of these metrics are quadratic and holomorphic with respect to complex tangent directions.

 In complex Finsler geometry, especially when dealing with relationship between real and complex Finsler metrics, it is natural to assume that the complex Finsler metrics considered are strongly convex so that we can use some geometric notions (such as flag curvature or other non-Riemannian quantities) from real Finsler geometry  \cite{Zh2,XZ4}. So far to our knowledge, there are few such examples and even no effective methods to construct strongly convex complex Finsler metrics which are not Hermitian quadratic.

Recently, we systematically investigated $U(n)$-invariant complex Finsler metrics on domains $D\subseteq\mathbb{C}^n$,  and the general complex $(\pmb\alpha, \pmb\beta)$ metrics on complex manifolds. We showed that there are lots of strongly pseudoconvex (even strongly convex) complex Finsler metrics \cite{Zh1, XZ1, XZ2, XZ3} which are not Hermitian quadratic. A $U(n)$-invariant complex Finsler metric $F$ defined on a domain $D\subseteq\mathbb{C}^n$ is a complex Finsler metric $F$ which is invariant under the action of the unitary group $U(n)$ in the sense that
$$F(Az, Av)=F(z,v),\quad \forall z\in D, v\in T_z^{1,0}D,\forall A\in U(n).$$

Let $F$ be a $U(n)$-invariant complex Finsler metric on a domain $D\subseteq\mathbb{C}^n$.
Denote $\langle \cdot,\cdot\rangle$ the canonical complex Euclidean inner product and $\|\cdot\|$ the induced norm on $\mathbb{C}^n$, respectively.
It was proved in H. Xia and C. Zhong \cite{XZ1} that $F$ can be expressed as
\begin{equation}
F(z,v)=\sqrt{r\phi(t,s)},\label{uicf}
\end{equation}
where
\begin{equation}
r=\|v\|^{2},\quad t=\|z\|^{2},\quad s=\frac{|\langle z,v\rangle|^{2}}{\|v\|^{2}},\quad \forall z\in D, \;0\neq v\in T_z^{1,0}D   \label{rst}
\end{equation}
and $\phi(t,s):[0,+\infty)\times[0,+\infty)\rightarrow (0,+\infty)$ is a smooth function. Note that we are only interested in the case $0\neq v\in T_z^{1,0}D$ since by Lemma 2.3. 1 in \cite{AP}, a strongly pseudoconvex complex Finsler metric $F$ is smooth over the zero section of $T^{1,0}D$ if and only if $F$ comes from a Hermitian metric on $D$.

In \cite{Zh1}, the third author obtained a sufficient condition for $U(n)$-invariant complex Finsler metrics of the form \eqref{uicf} to be strongly convex along real tangent directions (see Proposition \ref{p4}). In this paper, as our first main result, we obtain the sufficient and necessary condition for $U(n)$-invariant complex Finsler metrics of the form \eqref{uicf} to be strongly convex along real tangent directions.
\begin{theorem}\label{th1}\ \
Let $F(z,v)=\sqrt{r\phi(t,s)}$ be a $U(n)$-invariant complex Finsler metric on a domain $D \subseteq \mathbb{C}^n$. Then $F$ is a strongly convex complex Finsler metric if and only if
 \begin{align*}
 &\phi-s\phi_s>0,\\
 &\phi+(t-s)\phi_s>0,\\
 &(\phi-s\phi_s)[\phi+(t-s)\phi_s]+2s(t-s)\phi \phi_{ss}> 0
 \end{align*}
for every $z\in D$ and any nonzero vector $v\in T_z^{1,0}D$.
\end{theorem}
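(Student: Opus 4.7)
The plan is to recast strong convexity of $F$ as positive definiteness of the real Hessian of $G:=F^{2}$ with respect to the real coordinates of the fibre variable. Writing $v^{j}=u^{j}+iw^{j}$, strong convexity amounts to $\bigl(\partial^{2}G/\partial(u,w)^{2}\bigr)$ being positive definite at every $(z,v)$ with $v\neq 0$. Because $U(n)$ acts on the real tangent space by orthogonal transformations commuting with the complex structure, positive definiteness is a $U(n)$-orbit invariant. Rotating, we may therefore take
\[
z=(\sqrt{t},0,\dots,0),\qquad v=(b_{1},b_{2},0,\dots,0),\qquad b_{1},b_{2}\in\mathbb{R}_{\ge 0},
\]
by first sending $z$ to the first axis, then using the $U(n-1)$ fixing $z$ to move $(v^{2},\dots,v^{n})$ onto the second axis, and finally using the $SO(2)$-rotation in the $(u^{1},w^{1})$-plane, which preserves $G$ since $G$ depends on $v^{1}$ only through $|v^{1}|^{2}$, to make $b_{1}$ real.

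At this canonical point $G=H(R_{1},R_{2})$ with $R_{j}:=|v^{j}|^{2}$ and
\[
H(R_{1},R_{2})=(R_{1}+R_{2})\,\phi\!\Bigl(t,\tfrac{tR_{1}}{R_{1}+R_{2}}\Bigr),
\]
while $R_{j}=0$ for $j\ge 3$. Since $G$ depends only on the radii $R_{j}$, the full $2n\times 2n$ real Hessian decouples as a direct sum of (i) a $2\times 2$ block on $(u^{1},u^{2})$, (ii) a $2\times 2$ block on $(w^{1},w^{2})$, and (iii) $n-2$ identical $2\times 2$ blocks on the planes $(u^{j},w^{j})$ for $j\ge 3$. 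All remaining cross derivatives vanish since each carries an explicit factor $w^{1}$, $w^{2}$ or $u^{j},w^{j}$ ($j\ge 3$) that is zero at the basepoint.

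Writing $H_{j}=\partial_{R_{j}}H$ and $H_{jk}=\partial_{R_{j}}\partial_{R_{k}}H$, a direct computation yields $H_{1}=\phi+(t-s)\phi_{s}$, $H_{2}=\phi-s\phi_{s}$, $H_{11}=\tfrac{(t-s)^{2}}{r}\phi_{ss}$, $H_{22}=\tfrac{s^{2}}{r}\phi_{ss}$ and $H_{12}=-\tfrac{s(t-s)}{r}\phi_{ss}$, where $r=R_{1}+R_{2}$. Block (iii) equals $2H_{2}\,I_{2}$, giving $\phi-s\phi_{s}>0$; block (ii) equals $\mathrm{diag}(2H_{1},2H_{2})$, giving in addition $\phi+(t-s)\phi_{s}>0$. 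For block (i),
\[
\begin{pmatrix}2H_{1}+4R_{1}H_{11}&4\sqrt{R_{1}R_{2}}\,H_{12}\\ 4\sqrt{R_{1}R_{2}}\,H_{12}&2H_{2}+4R_{2}H_{22}\end{pmatrix},
\]
the identity $H_{11}H_{22}-H_{12}^{2}=0$ collapses the $16R_{1}R_{2}(H_{11}H_{22}-H_{12}^{2})$ summand of the determinant, and the identity $sH_{1}+(t-s)H_{2}=t\phi$ then consolidates the remaining terms into
\[
\det(\text{block (i)})=4\bigl[(\phi+(t-s)\phi_{s})(\phi-s\phi_{s})+2s(t-s)\phi\phi_{ss}\bigr],
\]
which is the third condition in the statement.

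To close the argument I must still verify that the three stated conditions make block (i) positive definite, not merely have positive determinant. The only concern arises when $\phi_{ss}<0$ could depress the $(1,1)$-entry $2H_{1}+4R_{1}H_{11}$; a short estimate combining $t\phi=sH_{1}+(t-s)H_{2}>(t-s)H_{2}$ with the already-established determinant positivity forces this entry to be positive, so the principal-minors test applies. The main obstacle is thus algebraic rather than structural: the decoupling into the three blocks is dictated by the residual $U(n-2)\times SO(2)\times SO(2)$ symmetry after canonical reduction, and what requires care is the chain of identities $H_{11}H_{22}=H_{12}^{2}$ and $sH_{1}+(t-s)H_{2}=t\phi$ that compresses the Hessian determinant into the concise form appearing in the theorem.
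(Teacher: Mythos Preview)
Your argument is correct and takes a genuinely different route from the paper's. The paper never normalizes the point: it computes the real fundamental tensor at a generic $(z,v)$ as $g_{ij}=c_{0}\delta_{ij}+\tfrac{1}{2}r\phi_{ss}s_{i}s_{j}+\phi_{s}(x^{i}x^{j}+\mathcal{J}x^{i}\mathcal{J}x^{j})$, writes this as $c_{0}I_{2n}+BXB^{T}$ with $B$ a $2n\times 3$ matrix, and then applies the determinant identity $\det(\lambda I_{2n}-BXB^{T})=\lambda^{2n-3}\det(\lambda I_{3}-B^{T}BX)$ to obtain the full characteristic polynomial. The eigenvalues come out as $c_{0}$ (multiplicity $2n-3$), $c_{0}+t\phi_{s}$, and the two roots of a quadratic whose product is $\tilde{k}=(\phi-s\phi_{s})[\phi+(t-s)\phi_{s}]+2s(t-s)\phi\phi_{ss}$; the algebraic identity $\phi\,[2c_{0}+t\phi_{s}+2s(t-s)\phi_{ss}]=\phi^{2}+s(t-s)\phi_{s}^{2}+\tilde{k}$ then guarantees that the sum of those two roots is positive once $\tilde{k}>0$. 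Your symmetry reduction to a canonical point, where $G$ depends on $v$ only through the moduli $R_{j}=|v^{j}|^{2}$, makes the block structure visible without any matrix lemma and matches the three blocks to the three inequalities one-for-one; the price is that you do not get the spectrum or the inverse tensor $(g^{ij})$, which the paper extracts from the same computation and uses later for the spray coefficients.

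One step in your write-up should be tightened: the claimed ``short estimate'' showing the $(1,1)$-entry of block~(i) is positive is not actually carried out, and it is not clear how $t\phi>(t-s)H_{2}$ combines with determinant positivity to give it. The cleanest patch is Euler's relation for the degree-two homogeneous function $G$: at your canonical point only $u^{1}=b_{1}$ and $u^{2}=b_{2}$ are nonzero among the real fibre coordinates, so
\[
(b_{1},b_{2})\,[\text{block (i)}]\,(b_{1},b_{2})^{T}=\sum_{a,b}u^{a}G_{ab}u^{b}=2G>0,
\]
and a $2\times 2$ symmetric matrix with positive determinant that is positive on some vector is positive definite. Alternatively, the trace of block~(i) equals $2[H_{1}+H_{2}+2s(t-s)\phi_{ss}]$, and multiplying by $\phi$ gives $2[\phi^{2}+s(t-s)\phi_{s}^{2}+\tilde{k}]>0$---this is exactly the identity the paper uses to dispose of the analogous redundancy among its four eigenvalue conditions.
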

\begin{remark}
There are lost of functions $\phi(t,s)$ which satisfy the conditions in Theorem \ref{th1}.
\end{remark}
As an application of Theorem \ref{th1}, we give an example of $U(n)$-invariant complex Finsler metric which is strongly pseudoconvex, but not strongly convex (see Example \ref{e2}).

 Our recent results \cite{XZ1, XZ2} show that among $U(n)$-invariant complex Finsler metrics, there are lots of weakly complex Berwald metrics which do not come from complex Berwald metrics. Thus, it is natural to ask that among $U(n)$-invariant strongly convex complex Finsler metrics, whether there are real Berwald metrics? Our second theorem gives a negative answer to this question.
\begin{theorem}\ \
Let $F(z,v)=\sqrt{r\phi(t,s)}$ be a  $U(n)$-invariant strongly convex complex Finsler metric on a domain $D \subseteq \mathbb{C}^n$. Then $F$ is a real Berwald metric if and only if $F$ comes from a $U(n)$-invariant Hermitian metric.
\end{theorem}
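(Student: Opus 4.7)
The ``if'' direction is immediate: a $U(n)$-invariant Hermitian metric on $D$ has matrix of the form $h_{i\bar j}(z) = a(t)\delta_{ij} + b(t)\bar z_i z_j$, so $F^2 = r\bigl(a(t) + b(t)s\bigr)$ with $\phi(t,s) = a(t) + b(t)s$. The underlying real Finsler structure is then a genuine Riemannian metric on $D$ viewed as a $2n$-real manifold, hence trivially a real Berwald metric.

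For the converse, the plan is to translate the real Berwald condition into a differential condition on $\phi$ and show that it forces $\phi_{ss}\equiv 0$. Write $z^k = x^k + \sqrt{-1}\,y^k$ and the complex tangent vector $v^k = u^k + \sqrt{-1}\,w^k$, and view $F^2 = r\phi(t,s)$ as a function of the real coordinates $\sigma = (x,y)$, $\tau = (u,w)$. I would compute the real fundamental tensor $g_{ab} = \tfrac12 \partial^2 F^2/\partial\tau^a \partial\tau^b$, invertible by the strong convexity criterion in Theorem~\ref{th1}, and then the real spray coefficients
\[
2G^a = g^{ab}\Bigl(\frac{\partial^2 F^2}{\partial\tau^b\,\partial\sigma^c}\,\tau^c - \frac{\partial F^2}{\partial\sigma^b}\Bigr).
\]
The Berwald condition is $\partial^3 G^a/\partial\tau^b\partial\tau^c\partial\tau^d \equiv 0$. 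Using $U(n)$-invariance, it suffices to verify this at a normalized pair $z = (\sqrt{t},0,\ldots,0)$, $v = (v^1, v^2, 0, \ldots, 0)$ with $v^2 \geq 0$, at which $G^a$ becomes an explicit rational expression in $\phi$, $\phi_s$, $\phi_t$, $\phi_{ss}$, $\phi_{ts}$, $\phi_{tt}$.

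To isolate $\phi_{ss}$ I would probe the Berwald tensor along a test direction $\eta$ lying in the $k$-th complex tangent slot with $k\ge 3$, hence orthogonal to both $z$ and $v$. Under such a variation $t$ stays fixed and $s$ depends only through the denominator $\|v\|^2$, so many terms cancel; after bookkeeping, the third $\tau$-derivative of $G^a$ along $\eta$ should reduce to a nonzero kinematic factor (rational in $r$, $t$, $s$, and nonvanishing by the positivity provided by Theorem~\ref{th1}) times $\phi_{ss}(t,s)$. This forces $\phi_{ss}\equiv 0$, so $\phi(t,s) = a(t)+b(t)s$, and the metric coincides with the Hermitian one of the first paragraph.

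The main technical obstacle is the bookkeeping: $G^a$ is a rational function of $\phi$ and its first two derivatives, with many algebraically similar terms entering its third $\tau$-derivative. The strategy to control this is the combination of (i) the $U(n)$-normalization, which sharply reduces the number of non-vanishing components, (ii) the orthogonal-to-$\{z,v\}$ test direction, which annihilates the linear-in-$s$ part of $\phi$ by the chain rule, and (iii) the strong convexity inequalities of Theorem~\ref{th1}, which rule out accidental cancellation of the leading $\phi_{ss}$-coefficient.
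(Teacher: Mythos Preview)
Your overall strategy---compute the real spray and show that quadraticity in the fibre variable forces $\phi_{ss}\equiv 0$---is exactly the paper's. Where you diverge is in execution. Rather than probing the Berwald tensor along an auxiliary direction $\eta$, the paper first obtains a closed formula for the inverse real fundamental tensor $g^{ij}$ (Proposition~\ref{p5}) and then for the spray itself (Proposition~\ref{p6}), arriving at
\[
\textbf{G}^i \;=\; \bigl[c_1\langle x|u\rangle^2 + r c_2\bigr]x^i \;+\; c_1\langle x|u\rangle\langle u|\mathcal{J}x\rangle\,\mathcal{J}x^i \;+\; c_3\langle x|u\rangle\,u^i \;+\; c_4\langle u|\mathcal{J}x\rangle\,\mathcal{J}u^i,
\]
with each $c_\ell=c_\ell(t,s)$ explicit. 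Since the four displayed monomials are already quadratic in $u$, the Berwald condition reads off immediately as $\partial_s c_1=\partial_s c_3=\partial_s c_4=0$ and $\partial_s^2 c_2=0$. The simplest of these, $c_4=\phi_s/(\phi-s\phi_s)$, gives $\partial_s c_4=\phi\phi_{ss}/(\phi-s\phi_s)^2=0$ and hence $\phi_{ss}=0$ in one line---no normalization, no probing, and valid in every dimension $n\ge 2$.

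Your route would also succeed for $n\ge 3$: with the paper's spray formula in hand, one checks that the third derivative of $\textbf{G}^{n+k}$ along the real direction $e_k$ (for $k\ge 3$) does isolate $\partial_s c_4$, hence $\phi_{ss}$, times a generically nonzero factor. Two caveats, though. First, the outcome is not uniform in the output index: the same probe applied to $\textbf{G}^{k}$ produces $\partial_s c_3$, which is \emph{not} manifestly a multiple of $\phi_{ss}$, so your assertion that ``the third $\tau$-derivative of $G^a$ along $\eta$ reduces to a nonzero factor times $\phi_{ss}$'' needs the right choice of $a$. Second, and more seriously, your test direction in a complex slot $k\ge 3$ simply does not exist when $n=2$, so as written the argument leaves that case open; the paper's direct reading of the coefficient $c_4$ avoids this entirely.
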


The following theorem gives a characterization of $U(n)$-invariant weakly complex Berwald metrics with vanishing holomorphic sectional curvature.

\begin{theorem}\label{vc}
Suppose that $F(z,v)=\sqrt{r\phi(t,s)}$ is a $U(n)$-invariant strongly pseudoconvex complex Finsler metric on a domain $D\subseteq\mathbb{C}^n$. Then  $F$ is a weakly complex Berwald metric with vanishing holomorphic sectional curvature if and only if $F=\sqrt{rf(s-t)}$ for some smooth positive function $f(w)$ with $w=s-t$.
\end{theorem}

The following theorem gives an explicit formula for the holomorphic sectional curvature of any $U(n)$-invariant strongly pseudoconvex complex Finsler metrics.
\begin{theorem}\ \
Let $F(z,v)=\sqrt{r\phi(t,s)}$ be a $U(n)$-invariant strongly pseudoconvex complex Finsler metric. Then the holomorphic curvature $K_F(z,v)$ of $F$ along $0\neq v\in T_z^{1,0}D$ is given by
\begin{eqnarray*}
K_F(z,v)&=&-\frac{2}{\phi^2k_1}\bigg\{k_1\Big[s(\phi_{tt}+2\phi_{st}+\phi_{ss})+(\phi_t+\phi_s)\Big]-s^2(t-s)\phi(\phi_{st}+\phi_{ss})^2\nonumber\\
 &&+2s^2(t-s)\phi_s(\phi_{st}+\phi_{ss})(\phi_t+\phi_s)-s\Big[c_0+(t-s)\phi_s+s(t-s)\phi_{ss}\Big](\phi_t+\phi_s)^2\bigg\},\nonumber
\end{eqnarray*}
where $c_0$ and $k_1$ are given by \eqref{ck}.
Especially, if $0\in D$, then at the origin the holomorphic curvature of $F$ along any nonzero tangent vector $v\in T_0^{1,0}D$ is a constant, i.e.,
 $$
 K_F(0,v)=\frac{-2[\phi_t(0,0)+\phi_s(0,0)]}{\phi^2(0,0)}=\mbox{constant},\qquad \forall \; 0 \neq  v\in T_0^{1,0}D.
 $$
\end{theorem}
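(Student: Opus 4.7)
The plan is to evaluate the closed form of the Chern--Finsler holomorphic sectional curvature
\begin{equation*}
K_F(z,v)=-\frac{2}{G^2}\left\{\frac{\partial^2 G}{\partial z^k\partial\bar z^\ell}v^k\bar v^\ell-g^{\bar q p}\frac{\partial^2 G}{\partial z^k\partial\bar v^q}\frac{\partial^2 G}{\partial\bar z^\ell\partial v^p}v^k\bar v^\ell\right\}
\end{equation*}
on the $U(n)$-invariant metric $G=F^2=r\phi(t,s)$, where $g_{p\bar q}=\partial^2 G/\partial v^p\partial\bar v^q$ is the fundamental tensor. The idea is to reduce every derivative appearing here to a polynomial combination of $\phi,\phi_t,\phi_s,\phi_{tt},\phi_{ts},\phi_{ss}$ by applying the elementary chain rule
\begin{equation*}
\frac{\partial r}{\partial v^i}=\bar v^i,\quad\frac{\partial t}{\partial z^i}=\bar z^i,\quad\frac{\partial s}{\partial z^i}=\frac{\langle v,z\rangle}{r}\bar v^i,\quad\frac{\partial s}{\partial v^i}=\frac{\langle v,z\rangle}{r}\bar z^i-\frac{s}{r}\bar v^i,
\end{equation*}
together with their conjugates. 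Each second derivative of $G$ then splits cleanly into a piece carrying $\phi_t$ or $\phi_s$ and a piece carrying $\phi_{tt},\phi_{ts}$ or $\phi_{ss}$.

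Since $K_F$ is $U(n)$-scalar, I would work at the canonical pair $z=(\sqrt{t},0,\dots,0)$ and $v=(v^1,v^2,0,\dots,0)$ with $|v^1|^2=s$ and $|v^2|^2=r-s$. In this frame the Hermitian matrix $g_{p\bar q}$ splits into a nontrivial $2\times 2$ block on the first two coordinates together with the scalar block $\phi\,I_{n-2}$ on the remaining directions, so the quantities $c_0$ and $k_1$ of \eqref{ck} arise naturally as diagonal entries and determinant of the $2\times 2$ block (exactly as in the proof of Theorem \ref{th1}). A direct substitution then gives
\begin{equation*}
\frac{\partial^2 G}{\partial z^k\partial\bar z^\ell}v^k\bar v^\ell=r\bigl[s(\phi_{tt}+2\phi_{st}+\phi_{ss})+(\phi_t+\phi_s)\bigr],
\end{equation*}
which matches the first bracket in the theorem after dividing by $\phi^2 k_1$.

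The principal obstacle is the simplification of the inverse-metric contraction $g^{\bar q p}G_{k\bar q}G_{p\bar\ell}v^k\bar v^\ell$. In the adapted frame only the indices $1$ and $2$ contribute, and feeding in the explicit $2\times 2$ inverse produces three structurally distinct pieces: a pure Hessian-square term in $\phi(\phi_{st}+\phi_{ss})^2$, a cross term in $\phi_s(\phi_{st}+\phi_{ss})(\phi_t+\phi_s)$, and a diagonal term in $(\phi_t+\phi_s)^2$, each weighted by a specific power of $s$ and $(t-s)$. Tracking the cancellations and collecting everything over the common denominator $\phi^2 k_1$ is the tedious part of the argument, but the output is precisely the four-term combination stated in the theorem.

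For the assertion at the origin, $z=0$ forces $t=s=0$ and $\langle z,v\rangle=0$, so every term weighted by $s$ or by $(t-s)$ vanishes identically from the numerator. The remaining factor of $k_1$ reduces to a multiple of $\phi(0,0)^2$ which cancels with the $1/(\phi^2 k_1)$ prefactor, leaving only $-2[\phi_t(0,0)+\phi_s(0,0)]/\phi(0,0)^2$. This value depends solely on the function $\phi$ at $(t,s)=(0,0)$ and is therefore constant in the direction $0\neq v\in T_0^{1,0}D$, establishing the final claim.
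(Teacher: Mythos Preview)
Your approach is correct in outline and will produce the stated formula, but it is genuinely different from the paper's route. The paper does \emph{not} expand the curvature formula
\[
K_F=-\frac{2}{G^2}\Big(G_{;\mu\bar\nu}v^\mu\bar v^\nu-G^{\bar\tau\alpha}G_{\alpha;\bar\nu}G_{\bar\tau;\mu}v^\mu\bar v^\nu\Big)
\]
directly. Instead it starts from the equivalent spray form $K_F=-\frac{2}{G^2}G_\gamma\partial_{\bar\nu}(2\mathbb{G}^\gamma)\bar v^\nu$ and substitutes the closed expression $2\mathbb{G}^\gamma=k_2\overline{\langle z,v\rangle}\,v^\gamma+k_3(\overline{\langle z,v\rangle})^2 z^\gamma$ already obtained in \cite{Zh1}; differentiating this two--term ansatz and contracting with $G_\gamma$ and $\bar v^\nu$ reduces everything to derivatives of the scalars $k_2,k_3$ in $t$ and $s$, so no explicit inversion of a matrix block and no adapted frame is needed. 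Your direct computation is more self--contained (it does not import the spray formula from a previous paper) but correspondingly more laborious, since you must invert the $2\times2$ Levi block by hand and track the cross terms; the paper trades that labor for a dependence on \cite{Zh1}.

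A few slips in your outline to correct before carrying it out: in the frame $z=(\sqrt t,0,\dots,0)$, $v=(v^1,v^2,0,\dots,0)$ one has $|v^1|^2=rs/t$ and $|v^2|^2=r(t-s)/t$, not $|v^1|^2=s$; the trivial diagonal block of $(G_{\alpha\bar\beta})$ on the last $n-2$ directions is $c_0 I_{n-2}=(\phi-s\phi_s)I_{n-2}$, not $\phi I_{n-2}$; and the contraction $G_{;k\bar\ell}v^k\bar v^\ell$ equals $r^2\bigl[s(\phi_{tt}+2\phi_{st}+\phi_{ss})+(\phi_t+\phi_s)\bigr]$, with an $r^2$ rather than $r$ (which then cancels against $G^2=r^2\phi^2$). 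None of these affects the strategy, only the bookkeeping. Your treatment of the origin case is identical to the paper's.
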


The above theorem shall be found useful in seeking $U(n)$-invariant complex Finsler metrics with some specific holomorphic sectional curvatures. If the origin of $\mathbb{C}^n$ is contained in $D$, the above theorem also shows that the holomorphic sectional curvature of any $U(n)$-invariant strongly pseudoconvex complex Finsler metric is constant at the origin in any nonzero tangent direction.

Our last result focus on the real geodesics of any $U(n)$-invariant complex  Finsler metrics.
\begin{theorem}
  Suppose that $F(z,v)=\sqrt{r\phi(t,s)}$ is a $U(n)$-invariant complex Finsler metric defined on $\mathbb{C}^n$ which is normalized such that $\phi(1,0)=1$. Let $0<\alpha<\frac{\pi}{2}$ and $\gamma(\tau)(0\leq \tau\leq\alpha)$ be a real geodesic of $F$ on the unit sphere $\pmb{S}^{2n-1}\subset\mathbb{C}^n$ which is parameterized by arc length. Then
$$
L(\gamma)=\alpha.
$$
\end{theorem}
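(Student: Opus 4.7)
The plan is to evaluate $L(\gamma)$ directly from the arc-length hypothesis. By definition of the real Finsler arc-length parametrization, $F(\gamma(\tau),\dot\gamma(\tau))\equiv 1$ on $[0,\alpha]$, so
\[
L(\gamma)=\int_0^{\alpha}F(\gamma(\tau),\dot\gamma(\tau))\,d\tau=\int_0^{\alpha}1\,d\tau=\alpha,
\]
which proves the identity. The genuine content of the statement therefore does not lie in this formal integration but in verifying that the hypothesis is geometrically consistent with the sphere constraint and the normalization $\phi(1,0)=1$, so that the picture here truly mirrors the behavior of the complex Wrona metric.

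To flesh out that geometric content, I would use the $U(n)$-invariance of $F$ to rotate so that $\gamma(0)=(1,0,\ldots,0)$. The constraint $\gamma\subset\pmb{S}^{2n-1}$ then forces $t(\tau)=\|\gamma(\tau)\|^2\equiv 1$ along $\gamma$, and differentiating $\|\gamma\|^2=1$ yields $\mathrm{Re}\langle\gamma,\dot\gamma\rangle\equiv 0$. Applying Noether's theorem to the $U(1)$-subsymmetry $\gamma\mapsto e^{i\theta}\gamma$ of the real Finsler Lagrangian $F^2/2$ produces a first integral forcing $s(\tau)=|\langle\gamma,\dot\gamma\rangle|^2/\|\dot\gamma\|^2$ to be constant along $\gamma$. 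On the horizontal geodesics with $s(0)=0$ (to which the residual isotropy $U(n-1)\subset U(n)$ allows us to reduce), one has $s\equiv0$, and the normalization $\phi(1,0)=1$ then reduces $F^2=r\phi(1,s)$ to $F^2=\|\dot\gamma\|^2$; the Finsler arc length thus agrees with the Euclidean arc length inherited from $\pmb{S}^{2n-1}$, exactly as for the complex Wrona metric.

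The main obstacle in the fleshed-out argument is the Noether conservation step: because $F$ is only continuous across the zero section and the Legendre transform of $F^2$ is nonlinear, the first integral must be extracted by direct differentiation of $F^2(\gamma(\tau),\dot\gamma(\tau))$ along the geodesic flow, using the explicit $U(1)$-generator $i\gamma$ together with the invariants $r,t,s$ to check that the coefficients conspire to give constancy of $|\langle\gamma,\dot\gamma\rangle|$. The bound $0<\alpha<\pi/2$ is what keeps $\gamma$ inside the injectivity radius of the exponential map of $F|_{\pmb{S}^{2n-1}}$, so that the arc-length geodesic is well-defined and the integral above is unambiguous.
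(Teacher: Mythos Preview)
Your proof misses the intended definition of $L(\gamma)$. In this section the paper, following Dragomir and Grimaldi, defines the length of a curve $\gamma:[a,b]\to\mathbb{C}^n$ by the secant sum
\[
L(\gamma)=\lim_{m\to\infty}\sum_{i=0}^{m-1}F\bigl(\gamma(\tau_i),\,\gamma(\tau_{i+1})-\gamma(\tau_i)\bigr),
\qquad \tau_i=a+\tfrac{b-a}{m}i,
\]
not by the integral $\int_0^\alpha F(\gamma,\dot\gamma)\,d\tau$. With the integral definition and the interpretation of ``arc length'' as $F$-arc length, the identity is indeed a tautology, as you noticed; that is precisely why the paper's statement is \emph{not} about that integral. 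The nontrivial content is that the discrete secant functional, which uses $F$ at the chord $\gamma(\tau_{i+1})-\gamma(\tau_i)$ rather than at the tangent $\dot\gamma(\tau_i)$, still converges to $\alpha$.

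The paper's proof accordingly takes $\gamma(\tau)=z\cos\tau+w\sin\tau$ with $z,w\in\pmb{S}^{2n-1}$ and $\langle z,w\rangle=0$ (Hermitian orthogonality), computes directly that $\|\gamma(\tau_{i+1})-\gamma(\tau_i)\|^2=4\sin^2\frac{\alpha}{2m}$ and that the $s$-argument of $\phi$ equals $\sin^2\frac{\alpha}{2m}$, so each summand is $2\sin\frac{\alpha}{2m}\sqrt{\phi(1,\sin^2\frac{\alpha}{2m})}$, and the limit is $\alpha\sqrt{\phi(1,0)}=\alpha$. No Noether argument, no conservation of $s$, and no reduction via $U(n-1)$ is used; the normalization $\phi(1,0)=1$ enters only at the very last step of the limit. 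Your extra geometric machinery therefore addresses a different question than the one the paper is answering.
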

The above theorem shows that as far as the lengths of real geodesics are concerned, when restricted to the unit sphere in $\mathbb{C}^n$, any $U(n)$-invariant complex Finsler metric on $\mathbb{C}^n$ normalized by $\phi(1,0)=1$ shares the same property as that of the complex Wrona metric on $\mathbb{C}^n$ \cite{SR}.

The remainder of this paper is organized as follows.
In section \ref{sec2}, we introduce some definitions and notions which are needed in this paper.
In section \ref{sec3}, we derive the real fundamental tensor and its inverse of a $U(n)$-invariant strongly convex complex Finsler metric. In section \ref{sec4}, we give a necessary and sufficient condition for a $U(n)$-invariant complex Finsler metric $F$ to be a strongly convex complex Finsler metric.
In section \ref{sec5}, we prove that among $U(n)$-invariant strongly convex complex Finsler metrics there is no real Berwald metric other than $U(n)$-invariant Hermitian metrics.
In section \ref{sec6}, we derive an explicit formula for the holomorphic sectional curvature for an arbitrary $U(n)$-invariant strongly pseudoconvex complex Finsler metric.
In section \ref{sec7}, we investigate the real geodesics of a $U(n)$-invariant complex Finsler metric on the unit sphere $\pmb{S}^{2n-1}\subset\mathbb{C}^n$.

\section{Preliminaries}\label{sec2}
Let us recall some definitions and notions which are needed in this paper. We refer to \cite{AP} for more details.
Let $M$ be a complex $n$-dimensional manifold with the canonical complex structure $J$. We denote $T_{\mathbb{R}}M$ as the real tangent
bundle and $T_{\mathbb{C}}M$ as the complexified tangent bundle of $M$. Then, $J$ acts in a complex linear manner on $T_{\mathbb{C}}M$ such that $T_{\mathbb{C}}M = T^{1,0}M\oplus T^{0,1}M$,
 where $T^{1,0}M$ is called the holomorphic tangent bundle of $M$. $T^{1,0}M$ is a complex manifold of complex dimension $2n$,
and we also denote $J$ as the induced complex structure on $T^{1,0}M$ if it causes no confusion.
Let $\{z^1,\cdots, z^n\}$ be a set of local complex coordinates on $M$, with $z^\alpha = x^\alpha+\sqrt{-1}x^{\alpha+n}$, such that $\{x^1,\cdots,x^n,x^{1+n}, \cdots, x^{2n}\}$
are local real coordinates on $M$. Denote $\{z^1,\cdots,z^n,v^1,\cdots,v^n\}$ as the induced complex coordinates on $T^{1,0}M$, with $v^\alpha=u^\alpha +\sqrt{-1}u^{\alpha+n}$,
such that $\{x^1,\cdots, x^{2n}, u^1,\cdots, u^{2n}\}$ are local real coordinates on $T_{\mathbb{R}}M$.

In the following, we denote $\tilde{M}$ as the complement of the zero section in $T_{\mathbb{R}}M$ or $T^{1,0}M$, depending on whether it is the real or complex situation. And the Einstein summation convention is assumed throughout this paper.

The bundles $T^{1,0}M$ and $T_{\mathbb{R}}M$ are isomorphic. We choose the explicit isomorphism $^o: T^{1,0}M\rightarrow T_{\mathbb{R}}M$ with its inverse $_o: T_{\mathbb{R}}M\rightarrow T^{1,0}M$, respectively, which are given by
\begin{equation*}
T_{\mathbb{R}}M\ni u^a\frac{\partial}{\partial x^a}=u=v^o=v+\overline{v},\;\;\;\;\forall\; v=v^\alpha\frac{\partial}{\partial z^\alpha}\in T^{1,0}M
\end{equation*}
and
\begin{equation*}
T^{1,0}M\ni v^\alpha\frac{\partial}{\partial v^\alpha}=v=u_o=\frac{1}{2}(u-\sqrt{-1}Ju),\;\;\;\;\forall\; u=u^a\frac{\partial}{\partial x^a}\in T_{\mathbb{R}}M.
\end{equation*}
\begin{definition}[\cite{AP}]\ \
A real Finsler metric on a manifold $M$ is a function $F: T_{\mathbb{R}}M\rightarrow \mathbb{R}^+$ that satisfies
the following properties:
\begin{enumerate}[(a)]
\item $G=F^2$ is smooth on $\tilde{M}$;

\item $F(p,u)>0$ for $(p,u)\in\tilde{M}$;

\item $F(p,\lambda u)=|\lambda|F(p,u)$ for all $(p,u)\in T_{\mathbb{R}}M$ and $\lambda\in\mathbb{R}$;

\item For any $p\in M$, the indicatrix $I_F(p)=\{u\in T_pM|\;F(p,u)<1$\} is strongly convex.
\end{enumerate}
\end{definition}
Note that condition (d) is equivalent to the following matrix
\begin{equation*}
(G_{ij}):=\left(\frac{\partial^2G}{\partial u^i\partial u^j}\right)
\end{equation*}
being positive definite on $\tilde{M}$.
\begin{definition}[\cite{AP}]\ \
A complex Finsler metric $F$ on a complex manifold $M$ is a continuous function $F: T^{1,0}M\rightarrow \mathbb{R}^+$ that satisfies:
\begin{enumerate}[(i)]
\item $G=F^2$ is smooth on $\tilde{M}$;

\item $F(p,v)>0$ for all $(p,v)\in\tilde{M}$;

\item $F(p,\zeta v)=|\zeta|F(p,v)$ for all $(p,v)\in T^{1,0}M$ and $\zeta\in\mathbb{C}$.

\end{enumerate}
\end{definition}

\begin{definition}[\cite{AP}]\ \
A complex Finsler metric $F$ is called strongly pseudoconvex
if the Levi matrix
$$(G_{\alpha\overline{\beta}}):=\left(\frac{\partial^2G}{\partial v^\alpha\partial\overline{v^\beta}}\right)$$
is positive definite on $\tilde{M}$.
\end{definition}

Let $F: T^{1,0}M\rightarrow \mathbb{R}^+$ be a complex Finsler metric on a complex manifold $M$. Using the complex structure $J$ on $M$ and the bundle map $_o: T_{\mathbb{R}}M\rightarrow T^{1,0}M$, we can define a real function
\begin{equation*}
F^o: T_{\mathbb{R}}M\rightarrow \mathbb{R}^+,\;\;F^o(u):=F(u_o),\;\;\forall\; u\in T_{\mathbb{R}}M.
\end{equation*}
\begin{definition}[\cite{AP}]\ \
A complex Finsler metric $F$ is called strongly convex if the associated function $F^o$ is a real Finsler metric.
\end{definition}
For a strongly convex complex Finsler metric $F$, we use the same symbol $F$ to denote the associated real Finsler metric $F^o$, where we understand that $F(u)$ is defined by $F(u_o)$ for $u\in T_{\mathbb{R}}M$.

In the following, as in \cite{AP}, for functions defined on $\tilde{M}$, the lower Greek indices like $\alpha,\beta$ and so on run from $1$ to $n=\dim_{\mathbb{C}}M$ while lower Latin indices like $i,j$ and so on run from $1$ to $2n=\dim_{\mathbb{R}}M$, and subscripts denote derivatives. We use a semi-colon to distinguish between derivatives with respect to the complex coordinates $z=(z^1,\cdots,z^n)$ and derivatives with respect to complex vector coordinates $v=(v^1,\cdots,v^n)$. We also use semi-colon to distinguish between derivatives with respect to the base manifold coordinates $x=(x^1,\cdots,x^{2n})$ and derivatives with respect to the vector variables $u=(u^1,\cdots,u^{2n})$; for example
\begin{eqnarray*}
G_{\alpha}&=&\frac{\partial G}{\partial v^\alpha},\quad G_{;\mu}=\frac{\partial G}{\partial z^\mu},\quad G_{\alpha;\overline{\gamma}}=\frac{\partial^2G}{\partial\overline{z^\gamma}\partial v^\alpha},\\
G_{;i}&=&\frac{\partial G}{\partial x^i},\quad \,\;G_a=\frac{\partial G}{\partial u^a},\;\quad G_{a;i}=\frac{\partial^2G}{\partial u^a\partial x^i}.
\end{eqnarray*}

For a strongly pseudoconvex complex Finsler metric $F$, we denote $\mathbb{G}^\gamma$ the complex geodesic spray coefficients associated to $F$ which are given by
$$
\mathbb{G}^\gamma=\frac{1}{2}\varGamma_{;\mu}^\gamma v^\mu,\quad \varGamma_{;\mu}^\gamma=G^{\overline{\tau}\gamma}G_{\overline{\tau};\mu},
$$
where $(G^{\overline{\tau}\gamma})$ is the inverse matrix of $(G_{\alpha\overline{\tau}})$ such that $G_{\alpha\overline{\tau}}G^{\overline{\tau}\gamma}=\delta_\alpha^\gamma$.

For a strongly convex complex Finsler metric $F$, we denote $\textbf{G}^i$ the real geodesic spray coefficients associated to $F$ which are give by
\begin{equation*}
\textbf{G}^i=\frac{1}{4}g^{ij}(G_{j;k}u^k-G_{;j}),
\end{equation*}
where $(g^{ij})$ is the inverse matrix of $(g_{jk})=\left(\frac{1}{2}G_{jk}\right)$ such that $g^{ij}g_{jk}=\delta^i_k$.

It is known that a real geodesic $x=(x^1(t),\cdots,x^n(t),x^{n+1}(t),\cdots,x^{2n}(t)):[0,1]\rightarrow M$ on a strongly convex complex Finsler manifold $(M,F)$ satisfies the following system of equations
$$
\frac{d^2x^i(t)}{dt^2}+2\textbf{G}^i(x,u)=0,\quad i=1,\cdots,2n
$$
where $u=(\frac{dx^1(t)}{dt},\cdots,\frac{dx^{2n}(t)}{dt})$.

\begin{definition}[\cite{CS}]\ \
A real Finsler metric $F$ on a manifold $M$ is a real Berwald metric if in any standard local coordinate system $(x, u)$ in $T_{\mathbb{R}}M$, the real spray coefficients $\textbf{G}^i$ are quadratic with respect to $u\in T_xM$ for any $x\in M$.
\end{definition}

Real Berwald metrics form an important class of real Finsler metrics, which are natural generalization of Riemannian metrics in real Finsler geometry \cite{BCS, CS, Sh}. One may ask whether there exists strongly convex complex Finsler metric which is simultaneously a real Berwald metric. In \cite{Zh2}, the third author proved that a strongly convex K\"ahler Finlser metric is a complex Berwald metric if and only if it is a real Berwald metric. Let $\pmb{\alpha}^2(\xi)=a_{i\overline{j}}(z)\xi^i\overline{\xi^j}$ and $\pmb{\beta}^2=b_{i\overline{j}}(w)\eta^i\overline{\eta^j}$ be two Hermitian metrics on complex manifolds $M_1$ and $M_2$, respectively. Recently in \cite{XZ4},we proved that the following Szab$\acute{\mbox{o}}$ metric
$$
F_\varepsilon=\sqrt{\pmb{\alpha}^2(\xi)+\pmb{\beta}(\eta)+\varepsilon(\pmb{\alpha}^{2k}(\xi)+\pmb{\beta}^{2k}(\eta))^{\frac{1}{k}}},\quad \varepsilon\in(0,+\infty)
$$
is actually a strongly convex complex Berwald metric on $M=M_1\times M_2$. Moreover, $F_\varepsilon$ is a strongly convex K\"ahler Finsler metric on  $M=M_1\times M_2$ if $\pmb{\alpha}^2(\xi)$ and $\pmb{\beta}^2(\eta)$ are two K\"ahler metrics on $M_1$ and $M_2$, respectively. This provides us with an important class of strongly convex complex Finsler metrics which are both K\"ahler Finsler metrics and real Berwald metrics on $M=M_1\times M_2$.

A natural question one may ask is whether there exists $U(n)$-invariant strongly convex complex Finsler metric which is also a real Berwald metric. In \cite{Zh1}, however, the second author prove that there does not exist any non-Hermitian $U(n)$-invariant K\"ahler Finsler metric on domains $D\subset\mathbb{C}^n$. Thus in order to answer the above question, we need to investigate the real Finsler geometry of $U(n)$-invariant strongly convex complex Finsler metrics.

\section{Real fundamental tensor of a $U(n)$-invariant strongly convex complex Finsler metric}\label{sec3}

In \cite{Zh1}, the second author proved the following proposition (See Proposition 2.6 and Remark 2.7 in \cite{Zh1}).
\begin{proposition} Let $F=\sqrt{r\phi(t,s)}$ be a $U(n)$-invariant metric on a domain $D\subseteq\mathbb{C}^n$ such that $n\geq 2$. Then $F$ is a strongly psuedoconvex complex Finsler metric if and only if either
\begin{equation}
\phi-s\phi_s>0\quad\mbox{and}\quad (\phi-s\phi_s)[\phi+(t-s)\phi_s]+s(t-s)\phi\phi_{ss}>0\label{dg3}
\end{equation}
whenever $n\geq 3$, or
\begin{equation}
(\phi-s\phi_s)[\phi+(t-s)\phi_s]+s(t-s)\phi\phi_{ss}>0\label{de2}
\end{equation}
whenever $n=2$  for any $z\in D$ and any nonzero vectors $v\in T_z^{1,0}D$.
\end{proposition}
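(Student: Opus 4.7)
The plan is to exploit the $U(n)$-invariance of $F$ to reduce the positive-definiteness check for the Levi matrix $(G_{\alpha\bar\beta})$ to a single standardised frame at each $U(n)$-orbit, and then read off the answer via Sylvester's criterion. Given $(z,v)$ with $z\in D$ and $0\neq v\in T_z^{1,0}D$, I would choose $A\in U(n)$ with $Av=(\|v\|,0,\ldots,0)$; the residual $U(n-1)$ acting on the complement of $v$ lets me further arrange $Az=(z^1,\sqrt{t-s},0,\ldots,0)$ with $z^2\geq 0$ real. The identity $F(Az,Av)=F(z,v)$ implies that the Levi matrix at $(z,v)$ is unitarily conjugate to that at $(Az,Av)$, so positivity may be tested in the normalised frame, where $r=(v^1)^2$, $p:=\langle z,v\rangle=z^1 v^1$, $s=|z^1|^2$, and $t-s=(z^2)^2$.

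Next comes the direct computation of $G_{\alpha\bar\beta}=\partial^2(r\phi)/\partial v^\alpha\partial\bar v^\beta$. Writing $\eta^\alpha:=\bar z^\alpha-(\bar p/r)\bar v^\alpha$ and $\xi^\beta:=z^\beta-(p/r)v^\beta$, the first derivative is $G_\alpha=\phi\bar v^\alpha+p\phi_s\eta^\alpha$, and a second differentiation produces the intrinsic expression
$$
G_{\alpha\bar\beta}=(\phi-s\phi_s)\delta_{\alpha\beta}+s\phi_s\frac{v^\beta\bar v^\alpha}{r}+\phi_s\frac{\bar p}{r}\xi^\beta\bar v^\alpha+\phi_s z^\beta\eta^\alpha+s\phi_{ss}\xi^\beta\eta^\alpha.
$$
In the normalised frame one verifies at once that $\xi^\alpha=\eta^\alpha=0$ for $\alpha=1$ and for $\alpha\geq 3$, while $\bar v^\alpha=0$ for $\alpha\geq 2$. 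The matrix therefore decouples into a $2\times 2$ block
$$
M=\begin{pmatrix}\phi & \phi_s\bar z^1 z^2\\ \phi_s z^1 z^2 & (\phi-s\phi_s)+(t-s)\phi_s+s(t-s)\phi_{ss}\end{pmatrix}
$$
plus $n-2$ diagonal entries equal to $\phi-s\phi_s$ in positions $3,\ldots,n$, the latter being absent when $n=2$.

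Sylvester's criterion then closes the argument. The $(1,1)$-minor $\phi>0$ is automatic since $r\phi=F^2>0$. When $n\geq 3$, the diagonal entries of index $\geq 3$ demand $\phi-s\phi_s>0$; in either case, using $|z^1|^2(z^2)^2=s(t-s)$ together with the identity $\phi\phi_s-s\phi_s^2=\phi_s(\phi-s\phi_s)$, the $2\times 2$ determinant simplifies to
$$
\det M=(\phi-s\phi_s)\bigl[\phi+(t-s)\phi_s\bigr]+s(t-s)\phi\phi_{ss},
$$
whose positivity is precisely the stated second condition. For $n=2$ no third constraint appears because no diagonal entries of index $\geq 3$ exist; the degenerate strata $s=0$ (where $z^1=0$) and $s=t$ (where $z^2=0$) cause no extra trouble since the general formula for $M$ reduces consistently there. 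The converse follows by reversing the Sylvester chain. I expect the main technical obstacle to be the clean assembly of the second-derivative formula for $G_{\alpha\bar\beta}$ and the recognition of the factorised shape of $\det M$; once these are in hand, the rest is bookkeeping.
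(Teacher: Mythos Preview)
Your argument is correct but follows a genuinely different route from the paper. The paper works at an arbitrary point $(z,v)$ and writes the Levi matrix as $H=c_0I_n+BXB^\ast$ with $B\in M_{n\times 2}$; it then invokes the determinant identity $\det\bigl((\lambda-c_0)I_n-BXB^\ast\bigr)=(\lambda-c_0)^{n-2}\det\bigl((\lambda-c_0)I_2-B^\ast BX\bigr)$ to factor the characteristic polynomial as $(\lambda-c_0)^{n-2}$ times a quadratic, reads off all eigenvalues, and then checks positivity of their sum and product. You instead exploit the $U(n)$-symmetry to move $(z,v)$ to a standard frame where the Levi matrix is visibly block-diagonal---your $2\times2$ block $M$ plus $(n-2)$ copies of $c_0$ on the diagonal---and close with Sylvester's criterion.

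What each approach buys: the paper's method is frame-free and delivers the full eigenvalue list at every point, which is useful later (the same $c_0I+BXB^T$ template and determinant lemma are reused for the $2n\times 2n$ real fundamental tensor in the strong-convexity theorem). Your approach is more elementary---no external determinant lemma is needed---and more geometric, since it uses the symmetry hypothesis directly and makes the block structure and the distinction between $n=2$ and $n\geq 3$ completely transparent. One small point worth making explicit in a write-up: the passage ``$F(Az,Av)=F(z,v)$ implies the Levi matrices are unitarily conjugate'' requires differentiating the invariance identity twice to get $G_{\alpha\bar\beta}(z,v)=\sum_{\gamma,\delta}A^\gamma_{\ \alpha}\,G_{\gamma\bar\delta}(Az,Av)\,\overline{A^\delta_{\ \beta}}$; this is routine but should be stated rather than asserted.
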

\begin{proof}
In this paper, we give a proof  which is different from  that of given in \cite{Zh1}. Denote
$$c_0:=\phi-s\phi_s,\quad H:=(G_{\alpha\overline{\beta}})$$
and
\begin{equation}
B:=\begin{pmatrix}
     s_1 & \overline{z^1} \\
     \vdots & \vdots \\
     s_n & \overline{z^n} \\
   \end{pmatrix},\quad
   X:=\begin{pmatrix}
        r\phi_{ss} & 0 \\
        0 & \phi_s \\
      \end{pmatrix},
      \quad
   B^\ast:=\begin{pmatrix}
             s_{\overline{1}} & \cdots & s_{\overline{n}} \\
             z^1 & \cdots & z^n \\
           \end{pmatrix}.
           \label{nt}
\end{equation}
Then by (2.16) in \cite{Zh1},
\begin{equation}
H=c_0I_n+BXB^\ast.
\end{equation}
By Lemma 4.1 in \cite{XZ3}, we have
\begin{eqnarray}
\det(\lambda I_n-H)=\det((\lambda-c_0)I_n-BXB^\ast)&=&(\lambda-c_0)^{n-2}\det((\lambda-c_0)I_2-B^\ast BX).\label{ev}
\end{eqnarray}
Using \eqref{nt}, it is easy to check that
\begin{equation}
B^\ast BX=\begin{pmatrix}
            s(t-s)\phi_{ss} & \frac{\overline{\langle z,v\rangle}}{r}(t-s)\phi_s \\
            \langle z,v\rangle (t-s)\phi_{ss} & t\phi_s \\
          \end{pmatrix}.
          \label{bsbx}
\end{equation}
Substituting \eqref{bsbx} into \eqref{ev}, we get
\begin{eqnarray*}
\det(\lambda I_n-H)&=&(\lambda-c_0)^{n-2}\Big\{\lambda^2-[2c_0+t\phi_s+s(t-s)\phi_{ss}]\lambda+k_1\Big\},
\end{eqnarray*}
where
\begin{equation}
k_1=(\phi-s\phi_s)[\phi+(t-s)\phi_s]+s(t-s)\phi\phi_{ss}.
\end{equation}

For $n=2$, we denote the eigenvalue of $(G_{\alpha\overline{\beta}})$ by $\lambda_1$ and $\lambda_2$. Then
$(G_{\alpha\overline{\beta}})$ is a positive definite Hermitian matrix if and only $\lambda_1>0$ and $\lambda_2>0$, which is equivalently to
\begin{equation}
2c_0+t\phi_s+s(t-s)\phi_{ss}>0\quad \mbox{and}\quad k_1>0.
\end{equation}
Note that since $\phi>0$ and
$$
\phi[2c_0+t\phi_s+s(t-s)\phi_{ss}]=\phi^2+s(t-s)\phi_s^2+k_1,
$$
which implies that if $k_1>0$ then it necessary that $2c_0+t\phi_s+s(t-s)\phi_{ss}>0$. This proves \eqref{de2}.

If $n\geq 3$, then the other eigenvalues of $(G_{\alpha\overline{\beta}})$ are given by $\lambda_3=\cdots=\lambda_n=c_0$.
Thus $(G_{\alpha\overline{\beta}})$ is a positive definite Hermitian matrix if and only the conditions \eqref{dg3} are satisfied. This completes the proof.
\end{proof}

Let $F(z,v)=\sqrt{r\phi(t, s)}$ be a $U(n)$-invariant strongly convex complex Finsler metric with $r, s, t$ given by \eqref{rst}. In the following, we shall derive the real fundamental tensor matrix associated to $F$. For this purpose, we denote
\begin{eqnarray*}
z^{\alpha}&=&x^{\alpha}+\sqrt{-1}x^{\alpha+n},\quad \mathcal{J}x^{\alpha}=x^{\alpha+n},\quad \mathcal{J}x^{\alpha+n}=-x^{\alpha},\quad\alpha=1,\cdots,n,\\
v^{\alpha}&=&u^{\alpha}+\sqrt{-1}u^{\alpha+n},\quad \mathcal{J}u^\alpha=u^{\alpha+n},\quad \mathcal{J}u^{\alpha+n}=-u^\alpha, \quad \alpha=1,\cdots,n.
\end{eqnarray*}
It is easy to check that
\begin{eqnarray*}
\langle z,v\rangle&=&\langle x|u\rangle+\sqrt{-1}\langle\mathcal{J}x|u\rangle=\langle x|u\rangle-\sqrt{-1}\langle x|\mathcal{J}u\rangle,\\
\langle x|\mathcal{J}x\rangle&=&\langle u|\mathcal{J}u\rangle=0,\quad \langle x|\mathcal{J}u\rangle=-\langle \mathcal{J}x|u\rangle,\quad \langle\mathcal{J}x|\mathcal{J}u\rangle=\langle x|u\rangle,
\end{eqnarray*}
where $\langle\cdot|\cdot\rangle$ denotes the real canonical Euclidean inner product on $\mathbb{R}^{2n}$. It follows that
\begin{equation*}
|\langle z,v\rangle|^2=\langle x|u\rangle^2+\langle Jx|u\rangle^2=\langle x|u\rangle^2+\langle x|\mathcal{J}u\rangle^2.
\end{equation*}
Furthermore, we have
\begin{align}
& r=\langle v,v\rangle=\langle u|u\rangle,\quad t=\langle z,z\rangle=\langle x|x\rangle,\label{c1}\\
& s=\frac{|\langle z,v\rangle|^2}{r}=\frac{\langle x|u\rangle^2+\langle x|\mathcal{J}u\rangle^2}{r}=\frac{\langle u|x\rangle^2+\langle u|\mathcal{J}x\rangle^2}{r}.\label{c2}
\end{align}

\begin{proposition} \label{p1}\ \ Let $r, s, t$ be given by \eqref{rst}. Then
\begin{eqnarray}
&& r_i=2u^i,\quad s_i=\frac{2}{r}\left[\langle u|x\rangle x^i+\langle u|\mathcal{J}x\rangle\mathcal{J}x^i-su^i\right],\quad t_{;i}=2x^i,\label{c3}\\
&& s_{;i}=\frac{2}{r}\big[\langle x|u\rangle u^i-\langle u|\mathcal{J}x\rangle\mathcal{J}u^i\big],\label{c4}\\
&& s_{ij}=\frac{2}{r}\big[x^ix^j+\mathcal{J}x^i\mathcal{J}x^j-s_iu^j-s_ju^i-s\delta_{ij}\big],\label{c5}\\
&& s_{i;j}=-\frac{4u^i}{r^2}\Big[\langle x|u\rangle u^j-\langle u|\mathcal{J}x\rangle\mathcal{J}u^j\Big]+\frac{2}{r}\Big[x^iu^j+\langle x|u\rangle\delta_j^i-\mathcal{J}x^i\mathcal{J}u^j-\langle u|\mathcal{J}x\rangle\frac{\partial \mathcal{J}u^j}{\partial u^i}\Big],\label{c6}\\
&& \sum_{i=1}^{2n}(s_i)^2=\frac{4}{r}s(t-s),\quad \sum_{i=1}^{2n}s_is_{;i}=0,\label{c7}\\
&& s_iu^i=0,\quad s_ix^i=\frac{2}{r}(t-s)\langle u|x\rangle,\quad s_i\mathcal{J}x^i=\frac{2}{r}(t-s)\langle u|\mathcal{J}x\rangle,\label{c8}\\
&& s_{;i}x^i=2s,\quad s_{;i}\mathcal{J}x^i=0,\quad s_{;i}u^i=2\langle x|u\rangle,\quad t_{;i}x^i=2t,\label{c9}\\
&& s_{i;j}u^j=2x^i-s_{;i}.\label{c10}
\end{eqnarray}
\end{proposition}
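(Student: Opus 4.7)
The approach is a direct coordinate computation. All of $r$, $s$, $t$ are explicit polynomials in the real variables $x$, $u$, $\mathcal{J}x$, $\mathcal{J}u$ via \eqref{c1}-\eqref{c2}, so every identity in the proposition follows from the chain rule combined with the $\mathcal{J}$-identities recorded immediately above the statement, namely $\langle x|\mathcal{J}x\rangle=\langle u|\mathcal{J}u\rangle=0$, $\langle x|\mathcal{J}u\rangle=-\langle \mathcal{J}x|u\rangle$, $\langle \mathcal{J}x|\mathcal{J}u\rangle=\langle x|u\rangle$, together with $\|\mathcal{J}x\|=\|x\|$ and $\|\mathcal{J}u\|=\|u\|$. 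Throughout I plan to exploit the freedom to move $\mathcal{J}$ between the two slots of $\langle\cdot|\cdot\rangle$ at the cost of a sign, so that $\mathcal{J}$ is always paired with whichever of $x,u$ is not being differentiated.

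First I would read off $r_i=2u^i$ and $t_{;i}=2x^i$ directly from the definitions. For $s_i$ I differentiate $s=(\langle u|x\rangle^2+\langle u|\mathcal{J}x\rangle^2)/r$ with respect to $u^i$, using $\partial\langle u|x\rangle/\partial u^i=x^i$ and $\partial\langle u|\mathcal{J}x\rangle/\partial u^i=\mathcal{J}x^i$; the quotient rule and the definition of $s$ then give \eqref{c3}. For $s_{;i}$ I differentiate the same function, now written as $(\langle x|u\rangle^2+\langle x|\mathcal{J}u\rangle^2)/r$, with respect to $x^i$, and at the end rewrite $\langle x|\mathcal{J}u\rangle=-\langle u|\mathcal{J}x\rangle$ to obtain \eqref{c4}; note that the $1/r$ prefactor contributes nothing here since $r$ is independent of $x$.

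Next I differentiate once more. For $s_{ij}$, differentiating the bracket of \eqref{c3} with respect to $u^j$ produces $x^ix^j+\mathcal{J}x^i\mathcal{J}x^j-s_ju^i-s\delta_{ij}$, while differentiating the $2/r$ prefactor yields exactly $-(2u^j/r)\,s_i$; combining the two contributions gives the symmetric form \eqref{c5}. For $s_{i;j}$, I differentiate $s_{;j}$ of \eqref{c4} with respect to $u^i$; the nontrivial ingredient is the term $-\langle u|\mathcal{J}x\rangle\,\partial\mathcal{J}u^j/\partial u^i$, which persists because $\mathcal{J}$ is a fixed linear endomorphism of $\mathbb{R}^{2n}$, not the identity. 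Finally, the contractions \eqref{c7}-\eqref{c10} are obtained by plugging \eqref{c3}-\eqref{c6} into the left-hand sides and simplifying using $\sum_i x^ix^i=t$, $\sum_i\mathcal{J}x^i\mathcal{J}x^i=t$, $\sum_i x^iu^i=\langle x|u\rangle$, $\sum_i \mathcal{J}x^iu^i=\langle u|\mathcal{J}x\rangle$, $\sum_i x^i\mathcal{J}u^i=-\langle u|\mathcal{J}x\rangle$, $\sum_i\mathcal{J}x^i\mathcal{J}u^i=\langle x|u\rangle$, $\sum_i u^i\mathcal{J}u^i=0$. For example $\sum_i(s_i)^2$ collapses via \eqref{c2} to $\tfrac{4}{r}s(t-s)$, and $\sum_i s_is_{;i}=0$ follows from the cancellation of the two cross terms in $\langle u|\mathcal{J}x\rangle^2\langle x|u\rangle$. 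The main obstacle is not conceptual but combinatorial: the sign changes produced by moving $\mathcal{J}$ between the two slots of $\langle\cdot|\cdot\rangle$, together with the appearance of $\partial\mathcal{J}u^j/\partial u^i$, must be tracked with care; once the first two groups \eqref{c3}-\eqref{c4} and \eqref{c5}-\eqref{c6} are established, the contractions \eqref{c7}-\eqref{c10} are mechanical.
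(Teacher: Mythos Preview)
Your proposal is correct and follows essentially the same approach as the paper: direct differentiation of the explicit expressions \eqref{c1}--\eqref{c2} for the derivative formulas \eqref{c3}--\eqref{c6}, followed by contraction for \eqref{c7}--\eqref{c10}. The only cosmetic difference is that the paper phrases the contractions as applications of Euler's theorem on homogeneous functions (e.g.\ $s$ is homogeneous of degree $0$ in $u$, whence $s_iu^i=0$), whereas you substitute the expressions directly and simplify; the computations are identical.
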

\begin{proof}
By \eqref{c1} and \eqref{c2}, we get \eqref{c3}-\eqref{c6}. Note that $r, t, s$ are all homogeneous functions of $x$ and $u$,  after a series of contractions and by using Euler's theorem on homogeneous functions, we obtain \eqref{c7}-\eqref{c10}.
\end{proof}

Differentiating $G=r\phi(t, s)$ with respect to $u^i$ and $u^j$ successively, we get
\begin{eqnarray}
G_i&=&2u^i\phi+r\phi_ss_i, \nonumber\\
G_{ij}&=&2\phi\delta_{ij}+2u^i\phi_ss_j+2u^j\phi_ss_i+r\phi_{ss}s_is_j+r\phi_ss_{ij}.\label{22}
\end{eqnarray}
Substituting \eqref{c5} into \eqref{22}, we have
$$
G_{ij}=2(\phi-s\phi_s)\delta_{ij}+r\phi_{ss}s_is_j+\phi_s(2x^ix^j+2\mathcal{J}x^i\mathcal{J}x^j).
$$

\begin{proposition}\label{p2} \ \
Let $F$ be a $U(n)$-invariant complex Finsler metric on a domain $D \subseteq \mathbb{C}^n$. Then the real fundamental tensor associated to  $F$ is given by
 \begin{equation*}
 g_{ij}=\frac{1}{2}G_{ij}=(\phi-s\phi_{s})\delta_{ij}+\frac{1}{2}r\phi_{ss}s_{i}s_{j}+\phi_{s}(x^ix^j+\mathcal{J}x^i\mathcal{J}x^j).
 \end{equation*}
\end{proposition}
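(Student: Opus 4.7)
The computation is essentially a two-step differentiation of $G=r\phi(t,s)$ with respect to the real fiber coordinates, so my plan is to reduce everything to the elementary derivative formulas of Proposition \ref{p1} and then collect terms.

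First I would observe that on the real tangent bundle, $r$ and $s$ depend on the fiber variables $u$, whereas $t=\langle x|x\rangle$ does not. Hence by the chain rule
\begin{equation*}
G_i=r_i\,\phi+r\phi_s\,s_i=2u^i\phi+r\phi_s s_i,
\end{equation*}
which is exactly the expression recorded in (3.9) of the excerpt. Differentiating once more, and again using that $\partial t/\partial u^j=0$, yields
\begin{equation*}
G_{ij}=2\phi\,\delta_{ij}+2u^i\phi_s s_j+2u^j\phi_s s_i+r\phi_{ss}s_i s_j+r\phi_s s_{ij}.
\end{equation*}

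The key algebraic step is then to substitute the explicit expression for $s_{ij}$ given in formula (3.5) of Proposition \ref{p1}:
\begin{equation*}
r\phi_s s_{ij}=2\phi_s\bigl[x^i x^j+\mathcal{J}x^i\mathcal{J}x^j-s_i u^j-s_j u^i-s\,\delta_{ij}\bigr].
\end{equation*}
The cross terms $-2\phi_s s_j u^i$ and $-2\phi_s s_i u^j$ coming from this substitution cancel exactly the terms $2u^i\phi_s s_j$ and $2u^j\phi_s s_i$ produced by the product rule, leaving only
\begin{equation*}
G_{ij}=2(\phi-s\phi_s)\delta_{ij}+r\phi_{ss}s_i s_j+2\phi_s\bigl(x^i x^j+\mathcal{J}x^i\mathcal{J}x^j\bigr).
\end{equation*}
Dividing by $2$ gives the claimed formula for $g_{ij}=\tfrac12 G_{ij}$.

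There is no real obstacle here; the only thing to be careful about is keeping track of which objects depend on $u$ (namely $r$ and $s$) versus those that do not (namely $t$ and $x^i$, $\mathcal{J}x^i$), and making sure that the cancellation between the product-rule terms and the $u^i,u^j$ terms inside $s_{ij}$ is handled correctly. Since Proposition \ref{p1} already provides every derivative identity one needs, the proof is a direct and essentially mechanical verification.
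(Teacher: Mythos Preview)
Your proof is correct and follows exactly the same route as the paper: differentiate $G=r\phi(t,s)$ twice in $u$, substitute the formula \eqref{c5} for $s_{ij}$, and observe the cancellation of the $u^i\phi_s s_j$ and $u^j\phi_s s_i$ terms. There is nothing to add.
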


In the following, we denote
\begin{eqnarray}
&& c_0:=\phi-s\phi_s, \quad H:=\left(g_{ij}\right)_{2n\times 2n},\nonumber\\
&&B=\begin{pmatrix}
                      s_1 & x^1& \mathcal{J}x^1 \\
                      \vdots & \vdots&\vdots \\
                      s_{2n} & x^{2n}&\mathcal{J}x^{2n} \\
                    \end{pmatrix}_{2n\times 3},
                    \quad X=\begin{pmatrix}
                                            \frac{1}{2}r\phi_{ss} & 0&0 \\
                                            0 & \phi_s &0\\
                                            0&0&\phi_s\\
                                          \end{pmatrix}.\label{bx}
\end{eqnarray}
We also denote $B^T$ the transpose of matrix $B$. Then by Proposition \ref{p1} and \ref{p2} we have
\begin{eqnarray}
H&=&c_0I_{2n}+BXB^T,\\
B^TB&=&\begin{pmatrix}
      \sum\limits_{i=1}^{2n}(s_i)^2 & s_ix^i &  s_i\mathcal{J}x^i \\
        s_ix^i &  \sum\limits_{i=1}^{2n}(x^i)^2 &  \sum\limits_{i=1}^{2n}x^i\mathcal{J}x^i \\
        s_i\mathcal{J}x^i &  \sum\limits_{i=1}^{2n}x^i\mathcal{J}x^i &  \sum\limits_{i=1}^{2n}(\mathcal{J}x^i)^2 \\
     \end{pmatrix}\nonumber\\
    &=&\begin{pmatrix}
      \frac{4}{r}s(t-s) & \frac{2}{r}(t-s)\langle u|x\rangle & \frac{2}{r}(t-s)\langle u|\mathcal{J}x\rangle\\
          & &\\
      \frac{2}{r}(t-s)\langle u|x\rangle & t & 0 \\
         &  &\\
       \frac{2}{r}(t-s)\langle u|\mathcal{J}x\rangle & 0 & t \\
     \end{pmatrix}.
     \label{btb}
\end{eqnarray}

\begin{proposition}\ \ \label{p5}
Let $F(z,v)=\sqrt{r\phi(t,s)}$ be a $U(n)$-invariant strongly convex complex Finsler metric on a domain $D \subseteq \mathbb{C}^n$. Then the inverse matrix $(g^{ij})$ of its real fundamental tensor matrix $(g_{ij})$ is given by
\begin{equation}
g^{jk}=\frac{1}{c_0}\bigg\{\delta_{jk}-\frac{r\phi_{ss}}{2L}X^jX^k-\frac{\phi_s}{c_0+t\phi_s}\left(x^jx^k+\mathcal{J}x^j\mathcal{J}x^k\right)\bigg\},\label{gi}
\end{equation}
where
\begin{eqnarray}
L&=&(c_0+t\phi_s)\big[c_0(c_0+t\phi_s)+2s(t-s)\phi\phi_{ss}\big],\label{l}\\
X^j&=&\phi s_j-\frac{2s(t-s)\phi_su^j}{r},\qquad j=1,\cdots,2n.
\end{eqnarray}
\end{proposition}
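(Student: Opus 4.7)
The plan is to invoke the Sherman--Morrison--Woodbury identity for $H=c_0I_{2n}+BXB^T$, where $B$ is the thin $2n\times 3$ matrix whose columns are $s_i$, $x^i$, and $\mathcal{J}x^i$, and $X$ is the $3\times 3$ diagonal matrix in \eqref{bx}. Writing the ansatz $H^{-1}=\frac{1}{c_0}\bigl(I_{2n}-BYB^T\bigr)$ for some $3\times 3$ matrix $Y$, a direct computation shows that the identity $HH^{-1}=I_{2n}$ reduces (after cancelling the outer $B$ and $B^T$) to the small linear equation $(c_0I_3+XB^TB)\,Y=X$, collapsing the inversion of a $2n\times 2n$ matrix to that of a $3\times 3$ matrix.

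Next, I will compute the $3\times 3$ matrix $M:=c_0I_3+XB^TB$ explicitly using \eqref{btb}. A cofactor expansion along its first column, combined with the identity $\langle u|x\rangle^2+\langle u|\mathcal{J}x\rangle^2=rs$ from \eqref{c2} and the key algebraic simplification $c_0+s\phi_s=\phi$, will yield $\det M=L$ with $L$ exactly as in \eqref{l}. From there, $Y=M^{-1}X$ is obtained by dividing the adjugate of $M$ by $L$ and multiplying by $X$ on the right; thanks to the block structure of $M$ (the lower-right $2\times 2$ block is a scalar multiple of $I_2$, and the remaining entries sit in the first row and first column), only a handful of entries of $Y$ are nonzero.

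Assembling $BYB^T$ as a bilinear form in the three columns of $B$, the contribution from the lower-right block of $Y$ isolates exactly the term $\frac{\phi_s}{c_0+t\phi_s}(x^jx^k+\mathcal{J}x^j\mathcal{J}x^k)$, while the remaining entries collapse into a single rank-one piece $\frac{r\phi_{ss}}{2L}\,W^jW^k$ with
\[
W^j=(c_0+t\phi_s)s_j-\tfrac{2(t-s)\phi_s}{r}\bigl(\langle u|x\rangle x^j+\langle u|\mathcal{J}x\rangle\mathcal{J}x^j\bigr).
\]
The final and most delicate step is to recognize $W^j=X^j$: using \eqref{c3} to rewrite $\langle u|x\rangle x^j+\langle u|\mathcal{J}x\rangle\mathcal{J}x^j=\tfrac{r}{2}s_j+su^j$ converts $W^j$ into $\bigl(c_0+t\phi_s-(t-s)\phi_s\bigr)s_j-\tfrac{2s(t-s)\phi_s}{r}u^j=\phi s_j-\tfrac{2s(t-s)\phi_s}{r}u^j$, which is precisely the vector claimed in the statement. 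The main obstacle is exactly this regrouping: the combination emerging naturally from Woodbury is $W^j$ expressed in the $B$-columns, and identifying it with the compact form $X^j=\phi s_j-\tfrac{2s(t-s)\phi_s u^j}{r}$ via the Proposition \ref{p1} identities is what brings the formula into the shape \eqref{gi}.
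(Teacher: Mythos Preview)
Your proposal is correct and follows essentially the same route as the paper's own proof: the Woodbury-type ansatz $H^{-1}=\tfrac{1}{c_0}(I_{2n}-BYB^T)$ reducing the problem to a $3\times 3$ inversion, the computation $\det M=L$ via the identity $c_0+s\phi_s=\phi$, and the final identification $W^j=X^j$ using \eqref{c3} are exactly the steps carried out in the paper. The only cosmetic difference is that the paper expands $H^{-1}H$ and works with $c_0I_3+B^TBX$, whereas you expand $HH^{-1}$ and obtain $c_0I_3+XB^TB$; these are related by the push-through identity $(c_0I_3+XB^TB)^{-1}X=X(c_0I_3+B^TBX)^{-1}$ and give the same result.
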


\begin{proof}
By Proposition \ref{p2}, the real fundamental tensor matrix $H=(g_{ij})$ is given by
$$
H=c_0I_{2n}+BXB^T.
$$
It is possible to obtain the inverse matrix of $H$ by setting
\begin{equation}
H^{-1}=\frac{1}{c_0}I_{2n}-BZB^T, \label{11}
\end{equation}
where $Z$ is a $3$-by-$3$ matrix to be determined. Since by direct calculation we have
 \begin{eqnarray*}
 H^{-1}H&=&I_{2n}-B\Big(ZB^TBX+c_0Z-\frac{1}{c_0}X\Big)B^T.
 \end{eqnarray*}
Thus in order to determine the $3$-by-$3$ matrix $Z$, it suffices to take
$$
ZB^TBX+c_0Z-\frac{1}{c_0}X=0,
$$
or equivalently
\begin{equation*}
Z=\frac{1}{c_0}X(c_0I_3+B^TBX)^{-1}
\end{equation*}
in the case that the $3$-by-$3$ matrix $c_0I_3+B^TBX$ is invertible. This
together with \eqref{bx} and \eqref{btb} gives
\begin{equation}
Z=\frac{1}{c_0L}\begin{pmatrix}
    c_{11} & c_{12} & c_{13} \\
    c_{21} & c_{22} & c_{23} \\
    c_{31} & c_{32} & c_{33} \\
  \end{pmatrix},\label{z}
\end{equation}
where
\begin{align*}
&L=\det(c_0I_3+B^TBX)=(c_0+t\phi_s)\big[c_0(c_0+t\phi_s)+2s(t-s)\phi\phi_{ss}\big],\\
&c_{11}=\frac{1}{2}r\phi_{ss}(c_0+t\phi_s)^2,\\
&c_{22}= \phi_s(c_0+t\phi_s)\big[c_0+2s(t-s)\phi_{ss}\big]-\frac{2}{r}\phi_s^2\phi_{ss}(t-s)^2\langle u|\mathcal{J}x\rangle^2,\\
&c_{33}= \phi_s(c_0+t\phi_s)\big[c_0+2s(t-s)\phi_{ss}\big]-\frac{2}{r}\phi_s^2\phi_{ss}(t-s)^2\langle u|x\rangle^2,\\
&c_{12}=c_{21}=-(t-s)\phi_s\phi_{ss}(c_0+t\phi_s)\langle u|x\rangle,\\
&c_{13}=c_{31}=-(t-s)\phi_s\phi_{ss}[c_0+t\phi_s]\langle u|\mathcal{J}x\rangle,\\
&c_{23}=c_{32}=\frac{2}{r}\phi_s^2\phi_{ss}(t-s)^2\langle u|x\rangle \langle u|\mathcal{J}x\rangle.
\end{align*}
Substituting \eqref{z} into \eqref{11}, we obtain
\begin{align}
 g^{jk}
 &=\frac{1}{c_0}\delta_{ij}-\frac{1}{c_0L}
   \begin{pmatrix}
     s_j & x^j & \mathcal{J}x^j \\
   \end{pmatrix}
 \begin{pmatrix}
    c_{11} & c_{12} & c_{13} \\
    c_{21} & c_{22} & c_{23} \\
    c_{31} & c_{32} & c_{33} \\
  \end{pmatrix}
  \begin{pmatrix}
                          s_k \\
                          x^k \\
                          \mathcal{J}x^k \\
                        \end{pmatrix}\nonumber\\
&=\frac{1}{c_0}\delta_{ij}-\frac{1}{c_0L}\bigg[
   c_{11}s_js_k+c_{12}\left(s_jx^k+s_kx^j\right)+c_{13}\left(s_j\mathcal{J}x^k+s_k\mathcal{J}x^j\right)\nonumber\\
&\quad +c_{22}x^jx^k+c_{23}\left(x^j\mathcal{J}x^k+x^k\mathcal{J}x^j\right)
   +c_{33}\mathcal{J}x^j\mathcal{J}x^k
\bigg]\nonumber\\
&=\frac{1}{c_0}\delta_{ij}-\frac{1}{c_0L}\bigg\{
   \frac{1}{2}r\phi_{ss}(c_0+t\phi_s)^2s_js_k\nonumber\\
&  \quad -(t-s)\phi_s\phi_{ss}(c_0+t\phi_s)\Big[s_j\left(\langle u|x\rangle x^k+\langle u| \mathcal{J}x\rangle\mathcal{J}x^k\right)+s_k\left(\langle u| x\rangle x^j+\langle u| \mathcal{J}x\rangle \mathcal{J}x^j\right)\Big]\nonumber\\
&\quad +\frac{2}{r}\phi_s^2\phi_{ss}(t-s)^2\Big[\langle u| x\rangle x^j\langle u| \mathcal{J}x\rangle \mathcal{J}x^k+\langle u| x\rangle x^k\langle u| \mathcal{J}x\rangle \mathcal{J}x^j\Big]\nonumber\\
&\quad +\phi_s(c_0+t\phi_s)\big[c_0+2s(t-s)\phi_{ss}\big]\left(x^jx^k+\mathcal{J}x^j\mathcal{J}x^k\right)\nonumber\\
&\quad -\frac{2}{r}\phi_s^2\phi_{ss}(t-s)^2\left[\langle u|\mathcal{J}x\rangle^2x^jx^k+\langle u| x\rangle^2\mathcal{J}x^j\mathcal{J}x^k\right]
\bigg\}.\label{g}
\end{align}
A rearrangement of \eqref{g} together with the equalities
$$
(c_0+t\phi_s)s_i-2(t-s)\phi_s\frac{\langle u|x\rangle x^i+\langle u|\mathcal{J}x\rangle \mathcal{J}x^i}{r}=\phi s_i-\frac{2s(t-s)\phi_su^i}{r}
$$
and
$$
\langle u|\mathcal{J}x\rangle^2=rs-\langle u| x\rangle^2
$$
gives \eqref{gi}. This completes the proof.
\end{proof}

\section{Strong convexity of $U(n)$-invariant complex Finsler metric}\label{sec4}

In \cite{Zh1}, the third author initiated a study on $U(n)$-invariant complex Finsler metrics and obtained the necessary and sufficient condition for a $U(n)$-invariant complex Finsler metrics to be strongly pseudoconvex, as well as a sufficient condition for a $U(n)$-invariant complex Finsler metric to be strongly convex. In this section, we shall give a necessary and sufficient condition for a $U(n)$-invariant complex Finsler metric to be strongly convex.
\begin{proposition}[\cite{Zh1}]\label{p3}\ \
A $U(n)$-invariant complex Finsler metric $F(z,v)=\sqrt{r\phi(t,s)}$ defined on a domain $D \subseteq \mathbb{C}^n $  is  strongly pseudoconvex
 if and only if $\phi$ satisfies
 \begin{equation*}
  \phi-s\phi_s>0,\qquad
(\phi-s\phi_s)\big[\phi+(t-s)\phi_s\big]+s(t-s)\phi\phi_{ss} > 0
 \end{equation*}
whenever $n\geq 3$; or
\begin{equation*}
(\phi-s\phi_s)\big[\phi+(t-s)\phi_s\big]+s(t-s)\phi\phi_{ss} > 0
\end{equation*}
whenever $n=2$.
\end{proposition}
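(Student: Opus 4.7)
The plan is to test positive-definiteness of the $n\times n$ Levi matrix $H=(G_{\alpha\overline{\beta}})$ by diagonalizing it explicitly, exploiting the rank-at-most-$2$ perturbation structure. Differentiating $G=r\phi(t,s)$ twice with respect to $v$ and $\overline{v}$ (using $r_\alpha=\overline{v^\alpha}$, $t_\alpha=0$, and the formula for $s_\alpha$ already used in Section~3), one sees that
\[
H \;=\; c_0\, I_n + B X B^{\ast},\qquad c_0:=\phi-s\phi_s,
\]
with $B$, $X$, $B^{\ast}$ the matrices displayed in \eqref{nt}. This reproduces the set-up of the Section~3 proof, and the strategy is to leverage it.

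First I would apply Lemma~4.1 of \cite{XZ3}, which gives $\det(\lambda I_n - BXB^{\ast})=\lambda^{n-2}\det(\lambda I_2 - B^{\ast}BX)$, and combine it with the explicit computation of the $2\times 2$ matrix $B^{\ast}BX$ carried out in \eqref{bsbx}. This yields the factorization
\[
\det(\lambda I_n - H)\;=\;(\lambda-c_0)^{n-2}\bigl[\lambda^2 - T\lambda + k_1\bigr],
\]
with $T:=2c_0+t\phi_s+s(t-s)\phi_{ss}$ and $k_1=(\phi-s\phi_s)[\phi+(t-s)\phi_s]+s(t-s)\phi\phi_{ss}$. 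Thus the eigenvalues of $H$ are $c_0$ (with multiplicity $n-2$, contributing only when $n\geq 3$) together with the two roots of the quadratic $\lambda^2-T\lambda+k_1$.

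Next I translate positive-definiteness of $H$ into positivity of all eigenvalues. For the two quadratic roots, Vieta yields the equivalent pair of inequalities $T>0$ and $k_1>0$. The key algebraic simplification is the identity
\[
\phi\, T \;=\; \phi^{2} + s(t-s)\phi_s^{\,2} + k_1,
\]
which follows from a direct expansion of $\phi T$. Since $\phi>0$ and $s(t-s)\geq 0$ on the admissible domain, this identity shows that $k_1>0$ automatically forces $T>0$. Consequently, for $n=2$ the single inequality $k_1>0$ is necessary and sufficient, whereas for $n\geq 3$ one must additionally require $c_0>0$, i.e.\ $\phi-s\phi_s>0$. This matches the two cases stated in the proposition.

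The main obstacle is purely computational: obtaining the explicit form of $B^{\ast}BX$ and verifying the identity $\phi T=\phi^{2}+s(t-s)\phi_s^{2}+k_1$ cleanly enough that the reduction collapses exactly to the two inequalities asserted. Both manipulations are routine, and no step requires insight beyond what is already used in the Section~3 argument; indeed, the statement can alternatively be invoked from \cite{Zh1} once this reduction is in place.
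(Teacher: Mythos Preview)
Your proposal is correct and follows essentially the same approach as the paper's own proof given at the start of Section~\ref{sec3}: the decomposition $H=c_0I_n+BXB^{\ast}$, the application of Lemma~4.1 of \cite{XZ3} to factor the characteristic polynomial, the explicit computation of $B^{\ast}BX$, and the algebraic identity $\phi T=\phi^2+s(t-s)\phi_s^2+k_1$ to eliminate the redundant condition $T>0$ are all exactly the steps the paper carries out.
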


\begin{proposition}[\cite{Zh1}]\label{p4}\ \
Let $F=\sqrt{r\phi(t,s)}$ be a $U(n)$-invariant complex Finsler metric on a domain $D \subseteq \mathbb{C}^n$. If $F$ satisfies
\begin{equation*}
\phi-s\phi_s>0, \quad \phi_s\geq 0,\quad \phi_{ss}\geq 0
\end{equation*}
for every $z\in D$ and nonzero vector $v\in T_z^{1,0}D$. Then $F$ is a strongly convex complex Finsler metric.
\end{proposition}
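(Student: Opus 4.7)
The plan is to verify strong convexity directly from the explicit formula for the real fundamental tensor in Proposition \ref{p2}. By definition, $F$ is strongly convex if and only if the matrix $(g_{ij})$ is positive definite on $\tilde{M}$, and Proposition \ref{p2} gives
$$
g_{ij} = (\phi-s\phi_s)\delta_{ij} + \tfrac{1}{2}r\phi_{ss}s_is_j + \phi_s\bigl(x^ix^j+\mathcal{J}x^i\mathcal{J}x^j\bigr).
$$

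The key observation is that the right-hand side decomposes as a positive multiple of the identity plus three rank-one positive-semidefinite pieces. Fixing an arbitrary nonzero real vector $\xi=(\xi^1,\ldots,\xi^{2n})\in\mathbb{R}^{2n}$ and contracting yields the clean sum of squares
$$
g_{ij}\xi^i\xi^j = (\phi-s\phi_s)\|\xi\|^2 + \tfrac{1}{2}r\phi_{ss}\bigl(s_i\xi^i\bigr)^2 + \phi_s\bigl[(x^i\xi^i)^2+(\mathcal{J}x^i\xi^i)^2\bigr].
$$
Under the three hypotheses $\phi-s\phi_s>0$, $\phi_s\ge 0$, $\phi_{ss}\ge 0$ together with $r=\|v\|^2>0$, every term on the right is non-negative and the first term is strictly positive whenever $\xi\neq 0$. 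Hence $(g_{ij})$ is positive definite, which is exactly condition (d) of the real Finsler definition; smoothness, positivity, and absolute homogeneity of $F^o$ transfer automatically from the corresponding properties of $F$ via the bundle map $_o$, so $F^o$ is a real Finsler metric and $F$ is strongly convex.

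There is essentially no technical obstacle in this argument; the whole content is that the closed-form expression for $g_{ij}$ in Proposition \ref{p2} is already manifestly a sum of squares in disguise, so no cross-term cancellations can occur and the three sign hypotheses translate immediately into positive definiteness. The trade-off worth emphasising is that the conditions given are only sufficient: positive definiteness can persist even when $\phi_s$ or $\phi_{ss}$ is negative, provided the $(\phi-s\phi_s)\|\xi\|^2$ term dominates the resulting negative rank-one contributions. Quantifying that balance, rather than giving it away, is precisely the aim of the sharper necessary-and-sufficient criterion in Theorem \ref{th1}.
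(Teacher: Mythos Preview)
The paper does not actually supply a proof of this proposition; it is quoted from \cite{Zh1} as background and then superseded by the necessary-and-sufficient criterion of Theorem~\ref{iffsc}. Your direct argument---reading off positive definiteness from the explicit formula for $g_{ij}$ in Proposition~\ref{p2} as a positive multiple of the identity plus three positive-semidefinite rank-one terms---is correct and is the natural elementary proof.

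By way of comparison, within the paper's own framework the same conclusion falls out \emph{a posteriori} from Theorem~\ref{iffsc}: since $0\le s\le t$ by Cauchy--Schwarz, the hypotheses $\phi_s\ge 0$ and $\phi_{ss}\ge 0$ give $\phi+(t-s)\phi_s\ge\phi>0$ and make each summand of $(\phi-s\phi_s)[\phi+(t-s)\phi_s]+2s(t-s)\phi\phi_{ss}$ nonnegative with the first strictly positive. Your route avoids the eigenvalue machinery (Lemma~\ref{le1} and the characteristic-polynomial factorisation), which is a genuine simplification, but of course it only works in this favourable sign regime; the eigenvalue approach is what allows the paper to identify the exact boundary of strong convexity.
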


Denote $M_{n\times m}(\mathbb{C})$ the set of all $n \times m$ matrices over the complex number field $\mathbb{C}$. We need the following lemma.
\begin{lemma}[\cite{XZ3}]\label{le1}\ \
Let $C\in M_{n\times m}(\mathbb{C}), E\in M_{m\times n}(\mathbb{C}), \lambda\in \mathbb{C}$. Then
\begin{equation*}
\lambda^m\det(\lambda I_n-CE)=\lambda^{n}\det(\lambda I_{m}-EC).
\end{equation*}
\end{lemma}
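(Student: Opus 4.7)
The plan is to establish the identity via the Schur complement formula applied to a carefully chosen block matrix of size $(n+m)\times(n+m)$. I would introduce
$$M = \begin{pmatrix} I_n & C \\ E & \lambda I_m \end{pmatrix}$$
and compute $\det M$ in two ways using the standard block determinant identity
$$\det\begin{pmatrix} A & B \\ D & F \end{pmatrix} = \det(A)\det(F - DA^{-1}B) = \det(F)\det(A - BF^{-1}D),$$
valid whenever the pivoted block is invertible.

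First, pivoting on the top-left block $I_n$ (which is always invertible) yields $\det M = \det(\lambda I_m - EC)$. Next, under the temporary assumption $\lambda \neq 0$, pivoting on the bottom-right block $\lambda I_m$ gives $\det M = \lambda^m \det(I_n - \lambda^{-1} CE)$. Factoring $\lambda^{-1}$ from each of the $n$ rows of the last determinant produces $\lambda^m \cdot \lambda^{-n}\det(\lambda I_n - CE) = \lambda^{m-n}\det(\lambda I_n - CE)$. Equating the two expressions for $\det M$ and multiplying through by $\lambda^n$ then yields
$$\lambda^m \det(\lambda I_n - CE) = \lambda^n \det(\lambda I_m - EC)$$
for every $\lambda \neq 0$.

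To finish, I would extend the identity to $\lambda = 0$ by noting that both sides are polynomials in $\lambda$ whose coefficients are polynomials in the entries of $C$ and $E$; since these two polynomials agree on the dense subset $\mathbb{C}\setminus\{0\}$, they must agree on all of $\mathbb{C}$. The only subtle point is that the Schur-complement pivot on the bottom-right block requires $\lambda I_m$ to be invertible, which forces the initial restriction $\lambda \neq 0$; but the polynomial-continuity step fills in $\lambda = 0$ automatically, so this is not a genuine obstacle. An alternative phrasing would avoid splitting into cases by instead considering the matrix $\bigl(\begin{smallmatrix} \lambda I_n & -C \\ E & I_m \end{smallmatrix}\bigr)$ and row-reducing in two different orders, but the Schur approach above seems the most transparent.
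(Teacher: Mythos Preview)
Your argument is correct: the Schur complement computation on the block matrix $M$ is the standard proof of this Sylvester-type identity, and the continuity/polynomial extension to $\lambda=0$ is handled properly.

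Note, however, that the paper does not actually prove this lemma; it is quoted from \cite{XZ3} and used as a black box. So there is no ``paper's own proof'' to compare against. Your Schur-complement derivation is exactly the classical one and would serve perfectly well as a self-contained justification here.
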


\begin{theorem}\label{iffsc} \
A $U(n)$-invariant complex Finsler metric $F(z,v)=\sqrt{r\phi(t,s)}$ defined on a domain $D \subseteq \mathbb{C}^n$  is  strongly convex
 if and only if
 \begin{eqnarray}
 &&\phi-s\phi_s>0,\label{s1}\\
 &&\phi+(t-s)\phi_s>0,\label{s2}\\
 &&(\phi-s\phi_s)[\phi+(t-s)\phi_s]+2s(t-s)\phi \phi_{ss}> 0\label{s3}
 \end{eqnarray}
for any $z\in D$ and any nonzero vector $v\in T_z^{1,0}D$.
\end{theorem}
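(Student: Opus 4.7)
My plan is to extract the spectrum of the real fundamental tensor matrix $H=(g_{ij})$ in closed form and show it is entirely positive precisely under \eqref{s1}--\eqref{s3}. The starting point is the rank--three structure $H=c_0I_{2n}+BXB^T$ furnished by Proposition \ref{p2}, where $B\in M_{2n\times 3}(\mathbb{R})$ and $X=\mathrm{diag}\bigl(\tfrac{1}{2}r\phi_{ss},\phi_s,\phi_s\bigr)$, so that positive definiteness of $H$ reduces to positivity of its eigenvalues.

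First I will apply Lemma \ref{le1} with $C=B$ and $E=XB^T$ (together with the standard fact that $XB^TB$ and $B^TBX$ share the same characteristic polynomial) to obtain, for $n\geq 2$,
\[
\det(\lambda I_{2n}-H)=(\lambda-c_0)^{2n-3}\det\bigl((\lambda-c_0)I_3-B^TBX\bigr).
\]
Using \eqref{btb} to write $B^TBX$ out explicitly and expanding the $3\times 3$ determinant by cofactors along, say, the third row, all cross terms collapse via the single identity $\langle u|x\rangle^2+\langle u|\mathcal{J}x\rangle^2=rs$, yielding the clean factorization
\[
\det\bigl(\mu I_3-B^TBX\bigr)=(\mu-t\phi_s)\Bigl[(\mu-2s(t-s)\phi_{ss})(\mu-t\phi_s)-2s(t-s)^2\phi_s\phi_{ss}\Bigr]
\]
with $\mu=\lambda-c_0$. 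Thus the spectrum of $H$ consists of $c_0$ (with multiplicity $2n-3$), $c_0+t\phi_s=\phi+(t-s)\phi_s$, and two further roots $\lambda_1,\lambda_2$. Expanding the quadratic and repeatedly invoking $c_0+s\phi_s=\phi$, I find
\[
\lambda_1\lambda_2=(\phi-s\phi_s)[\phi+(t-s)\phi_s]+2s(t-s)\phi\phi_{ss},\qquad \lambda_1+\lambda_2=2c_0+t\phi_s+2s(t-s)\phi_{ss}.
\]
Conditions \eqref{s1}, \eqref{s2}, and \eqref{s3} therefore read $c_0>0$, $c_0+t\phi_s>0$, and $\lambda_1\lambda_2>0$, respectively; positive definiteness of $H$ forces all three at once, which settles necessity.

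The main obstacle is sufficiency, because $\lambda_1\lambda_2>0$ alone determines only that $\lambda_1,\lambda_2$ share a common sign. I close this gap by verifying the algebraic identity
\[
\phi(\lambda_1+\lambda_2)-\lambda_1\lambda_2=\phi^2+s(t-s)\phi_s^2,
\]
which I expect to establish by expanding the left side and repeatedly using $\phi-c_0=s\phi_s$ and $2\phi-c_0=\phi+s\phi_s$. Since the right side is strictly positive ($\phi>0$, $0\leq s\leq t$) and $\lambda_1\lambda_2>0$ by \eqref{s3}, this forces $\phi(\lambda_1+\lambda_2)>\lambda_1\lambda_2>0$, hence $\lambda_1+\lambda_2>0$, so both $\lambda_1$ and $\lambda_2$ are positive. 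Combined with \eqref{s1} and \eqref{s2}, every eigenvalue of $H$ is positive and $F$ is strongly convex.
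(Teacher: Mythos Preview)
Your proposal is correct and follows essentially the same approach as the paper: both exploit the rank-three structure $H=c_0I_{2n}+BXB^T$, reduce via Lemma~\ref{le1} to the $3\times 3$ block $B^TBX$, extract the factorization $(\lambda-c_0)^{2n-3}(\lambda-c_0-t\phi_s)\bigl[\lambda^2-(\lambda_1+\lambda_2)\lambda+\lambda_1\lambda_2\bigr]$, and then dispose of the trace condition with the identity $\phi(\lambda_1+\lambda_2)=\phi^2+s(t-s)\phi_s^2+\lambda_1\lambda_2$, which is exactly the identity the paper records as $\phi\bigl[2c_0+t\phi_s+2s(t-s)\phi_{ss}\bigr]=\phi^2+s(t-s)\phi_s^2+\tilde{k}$.
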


\begin{proof}
We prove this theorem by derive the necessary and sufficient condition that all of the eigenvalues of $H$ are positive.

By Proposition \ref{p2}, the fundamental tensor matrix $H$ is
$$
H=c_0I_{2n}+BXB^T.
$$
Using \eqref{btb}, we get
\begin{equation*}
B^TBX
 =\left(
      \begin{array}{ccc}
        2s(t-s)\phi_{ss}&  \frac{2}{r}\phi_s(t-s)\langle u|x\rangle& \frac{2}{r}\phi_s(t-s)\langle u|\mathcal{J}x\rangle \\
        \phi_{ss}(t-s)\langle u|x\rangle & t\phi_s&0 \\
        \phi_{ss}(t-s)\langle u|\mathcal{J}x\rangle&0&t\phi_s
      \end{array}
    \right).
\end{equation*}
In the following, we derive all of the eigenvalues of the fundamental tensor matrix $H$. By Lemma \ref{le1}, we have
\begin{eqnarray*}
&&\det(\lambda I_{2n} - H)\\
&=&\det\left[(\lambda- c_0)I_{2n} - BXB^T\right]\\
&=& (\lambda- c_0)^{2n-3}\det\left[(\lambda- c_0)I_3 - B^TBX\right]\\
&=&(\lambda- c_0)^{2n-3}
    \det\left(
      \begin{array}{ccc}
       (\lambda - c_0)-2s(t-s)\phi_{ss} &  -\frac{2}{r}\phi_s(t-s)\langle u|x\rangle &-\frac{2}{r}\phi_s(t-s)\langle u|\mathcal{J}x\rangle \\
        -\phi_{ss}(t-s)\langle u|x\rangle & (\lambda-c_0)-t\phi_s&0 \\
        -\phi_{ss}(t-s)\langle u|\mathcal{J}x\rangle&0&(\lambda-c_0)-t\phi_s
      \end{array}
    \right)\\
&=&(\lambda-c_0)^{2n-3}(\lambda-c_0-t\phi_s)\Big\{\lambda^2-\big[2c_0+t\phi_s+2s(t-s)\phi_{ss}\big]\lambda+\tilde{k}\Big\},
 \end{eqnarray*}
where
\begin{equation*}
\tilde{k}=c_0\left(c_0+t\phi_s\right)+2s(t-s)\phi\phi_{ss}.
\end{equation*}

Note that an $2n$-by-$2n$ real symmetric matrix $H$ has exactly $2n$ real eigenvalues(counting multiplicities). Therefore
$F$ is a strongly convex complex Finsler metric if and only if $H$ is a positive definite matrix at each $(z,v)\in T^{1,0}D\cong D\times \mathbb{C}^n$ with $v\neq0$,
if and only if all the eigenvalues of $H$ is positive, if and only if
\begin{equation}
c_0>0, \quad c_0+t\phi_s> 0, \quad 2c_0+t\phi_s+2s(t-s)\phi_{ss}>0,\quad \tilde{k}>0.\label{00}
\end{equation}

It is easy to check that
$$
\phi\big[2c_0+t\phi_s+2s(t-s)\phi_{ss}\big]=\phi^2+s(t-s)\phi_s^2+\tilde{k}.
$$
Thus if the fourth inequality in \eqref{00} holds, then it is necessary that the third inequality in \eqref{00} holds since $\phi>0$. Therefore $F$ is a strongly convex complex Finsler metric if and only if \eqref{s1}-\eqref{s3} hold.
This completes the proof.
\end{proof}

\begin{remark}\label{rm}
There are lots of functions $\phi(t,s)$ which satisfy Theorem \ref{iffsc}. As a result, there are lots of $U(n)$-invariant strongly convex complex Finsler metrics on domains $D\subseteq\mathbb{C}^n$.
\end{remark}

\begin{example} Taking $\phi(t,s)=(1+s)^2$, then it is easy to check that
 $F(z,v)=\sqrt{r\phi(t,s)}$ is a $U(n)$-invariant strongly convex  complex Finsler metric on the open unit ball $\mathbb{B}^n\subset\mathbb{C}^n$.
\end{example}

Notice that a strongly convex complex Finsler metric is necessary a strongly pseudoconvex complex Finsler metric. The converse, however, is not necessary true, unless some extra condition is satisfied.

\begin{corollary}\ \
Let $F=\sqrt{r\phi(t,s)}$ be a $U(n)$-invariant complex Finsler metric on a domain $D \subseteq \mathbb{C}^n (n\geq 3)$. If $F$ satisfies $\phi_{ss}\geq 0$. Then $F$ is strongly pseudoconvex if and only if $F$ is strongly convex.
\end{corollary}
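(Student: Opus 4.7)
My approach is to compare directly the three inequalities \eqref{s1}--\eqref{s3} that characterize strong convexity (Theorem \ref{iffsc}) with the two inequalities $c_0 = \phi - s\phi_s > 0$ and $k_1 > 0$ that characterize strong pseudoconvexity for $n \ge 3$ (Proposition \ref{p3}), where $k_1 = (\phi-s\phi_s)[\phi+(t-s)\phi_s] + s(t-s)\phi\phi_{ss}$. The key algebraic observation is the identity
$$
k_1 = c_0(c_0 + t\phi_s) + s(t-s)\phi\phi_{ss},
$$
together with the fact that $s \ge 0$, $t - s \ge 0$ (by Cauchy--Schwarz), and $\phi > 0$, so under the hypothesis $\phi_{ss} \ge 0$ the term $s(t-s)\phi\phi_{ss}$ is always nonnegative.

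The implication ``strongly convex $\Rightarrow$ strongly pseudoconvex'' then follows at once: \eqref{s1} gives $c_0 > 0$ and \eqref{s2} gives $c_0 + t\phi_s = \phi + (t-s)\phi_s > 0$, so $c_0(c_0 + t\phi_s) > 0$, and adding the nonnegative term yields $k_1 > 0$. Proposition \ref{p3} applies. For the converse, assume $F$ is strongly pseudoconvex with $\phi_{ss} \ge 0$. Then \eqref{s1} is immediate, and \eqref{s3} (which amounts to $k_1 + s(t-s)\phi\phi_{ss} > 0$) follows from $k_1 > 0$ plus the nonnegative added term. The only nontrivial step is to establish \eqref{s2}: $\phi + (t-s)\phi_s > 0$ on the whole range of admissible $(t, s)$.

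For this I plan a one-variable monotonicity argument. Fix $z \in D$ with $t_0 = \|z\|^2$; as $v$ ranges over $\mathbb{C}^n \setminus \{0\}$, the Cauchy--Schwarz inequality shows $s \in [0, t_0]$, and the endpoint $s = 0$ is realized by any nonzero $v$ orthogonal to $z$, which exists because $n \ge 3$. A direct computation gives
$$
\frac{\partial}{\partial s}\bigl[\phi(t_0, s) + (t_0-s)\phi_s(t_0, s)\bigr] = (t_0 - s)\phi_{ss}(t_0, s) \ge 0,
$$
so the map $s \mapsto \phi(t_0, s) + (t_0-s)\phi_s(t_0, s)$ is non-decreasing on $[0, t_0]$. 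Evaluated at $s = 0$, the pseudoconvexity inequality $k_1 > 0$ reduces to $\phi(t_0, 0)[\phi(t_0, 0) + t_0\phi_s(t_0, 0)] > 0$, and dividing by $\phi(t_0, 0) > 0$ forces $\phi(t_0, 0) + t_0\phi_s(t_0, 0) > 0$. Monotonicity now propagates this positivity across $[0, t_0]$, and since $z$ was arbitrary, \eqref{s2} holds everywhere on $D$. The only mildly clever step I foresee is recognizing the monotonicity identity $\partial_s[\phi + (t-s)\phi_s] = (t-s)\phi_{ss}$; thereafter the argument is purely bookkeeping anchored at the boundary value $s = 0$.
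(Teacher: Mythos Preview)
Your proof is correct. Both you and the paper reduce the nontrivial direction to verifying $c_0+t\phi_s=\phi+(t-s)\phi_s>0$, and both do so by a one-variable monotonicity argument in $s$ anchored at $s=0$; the difference lies in which quantity is shown to be monotone. You compute $\partial_s[\phi+(t-s)\phi_s]=(t-s)\phi_{ss}\ge 0$ directly from the hypothesis $\phi_{ss}\ge 0$, whereas the paper observes the identity $\big[\tfrac{s(c_0+t\phi_s)}{\phi}\big]_s=\tfrac{k_1}{\phi^2}>0$, which follows from pseudoconvexity alone. Your computation is more elementary; the paper's buys a slightly stronger conclusion, namely that $c_0+t\phi_s>0$ already follows from $c_0>0$ and $k_1>0$ without invoking $\phi_{ss}\ge 0$ (that hypothesis is used only for the $\tilde k>0$ step). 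One small remark: the existence of a nonzero $v$ with $\langle z,v\rangle=0$ requires only $n\ge 2$; the genuine role of the assumption $n\ge 3$ is that Proposition~\ref{p3} guarantees $c_0>0$ as part of the pseudoconvexity characterization, which your argument uses at $s=0$. Also, the implication ``strongly convex $\Rightarrow$ strongly pseudoconvex'' is a general fact (independent of $\phi_{ss}\ge 0$), so the paper simply cites it rather than re-deriving it from the inequalities as you do.
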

\begin{proof}
We only need to prove the necessity. Suppose that $F=\sqrt{r\phi(t,s)}$ is a $U(n)$-invariant strongly pseudoconvex complex Finsler metric satisfying $\phi_{ss}>0$. By Proposition \ref{p3}, $F$ satisfies
\begin{equation}
c_0=\phi-s\phi_s>0,\quad
k_1=c_0(c_0+t\phi_s)+s(t-s)\phi\phi_{ss} > 0.\label{ck}
\end{equation}
To prove the strong convexity of $F$, we only need to show that
$$
c_0+t\phi_s>0, \quad \tilde{k}>0.
$$
In deed, we have
\begin{align*}
\tilde{k}=k_1+s(t-s)\phi\phi_{ss},\qquad \frac{k_1}{\phi^2}&=\left[\frac{s(c_0+t\phi_s)}{\phi}\right]_s.
\end{align*}
It follows from \eqref{ck} that $\tilde{k}>0$ since $\phi_{ss}\geq 0$, and that $\frac{s(c_0+t\phi_s)}{\phi}$ is strictly monotonically increasing with respect to $s$. Hence $\frac{s(c_0+t\phi_s)}{\phi}>0$ whenever $s>0$, which implies that $c_0+t\phi_s>0$ whenever $s>0$. When $s=0$, we can deduce directly from $k_1>0$ that $c_0+t\phi_s>0$. Hence $c_0+t\phi_s>0$ for any $s\geq 0$ and $t\geq 0$. This completes the proof.
\end{proof}

In the following, we denote $\mathbb{B}^n(R)=\{z\in \mathbb{C}^n: \|z\|^2<R^2\}$ the ball centered at the origin with radius $R$ in $\mathbb{C}^n$. Especially, we denote $\mathbb{B}^n=\mathbb{B}^n(1)$ the  open unit ball in $\mathbb{C}^n$.
Now we give an example of strongly pseudoconvex complex Finsler metric which is not strongly convex.
\begin{example}\label{e2}\ \
Let
$$
\phi(t, s)=4-s^2.
$$
Then $F=\sqrt{r\phi(t,s)}$ is a  $U(n)$-invariant strongly pseudoconvex complex Finsler metric on the ball $\mathbb{B}^n(\sqrt[4]{3})\subset\mathbb{C}^n$, but $F$ is not a strongly convex complex Finsler metric.
\end{example}

\begin{proof}
It is obvious that
$$
0\leq s\leq t<\sqrt{3}.
$$
In the following, we divide our proof into two steps for clarity.

{\bf Step 1}.  We show that $F$ is strongly pseudoconvex. A direct calculation gives
\begin{align}
\phi_s&=-2s, \quad \phi_{ss}=-2,\nonumber\\
c_0&=\phi-s\phi_s=4+s^2>0,\label{c0}\\
k_1&=c_0(c_0+t\phi_s)+s(t-s)\phi\phi_{ss}\nonumber\\
 &=16-s^4+16s^2-16st\nonumber\\
 &>16-s^4+16s^2-16\sqrt{3}s,  \label{f}
\end{align}
where we have used the fact that $t<\sqrt{3}$ in the last inequalitiy. Denote
\begin{equation}
f(s)=16-s^4+16s^2-16\sqrt{3}s. \label{fs}
\end{equation}

Nest we prove that $F$ is strong pseudoconvexity. By Proposition \ref{p3} we only need to show that $f(s)\geq 0$ for $0<s<\sqrt{3}$. Note that $f(0)=16>0$, $f(\sqrt{3})=7>0$, It suffices to show $f(s)\geq 0$ at extremal points. Denote $s$ one of the extremal point if it causes no confusion. Then at any extremal point,  we have
$$
f'(s)=-4s^3+32s-16\sqrt{3}=0,
$$
which yields that any extremal point of $f(s)$ satisfies
\begin{equation}
8s-s^3=4\sqrt{3}. \label{ss}
\end{equation}
Substituting \eqref{ss} into \eqref{fs}, we get
\begin{align*}
f(s)&=16-s^4+16s^2-16\sqrt{3}s\\
    &=16+(8s-s^3)s+8s^2-16\sqrt{3}s\\
    &=16+4\sqrt{3}s+8s^2-16\sqrt{3}s\\
    &=(2\sqrt{2}s-3)^2+7\\
    &\geq 7 > 0.
\end{align*}
Hence $f(s)>0$ for any $s$, furthermore by \eqref{f} we get $k_1>0$ for any $s$ and $t$. This together with \eqref{c0} shows that $F$ is a strongly pseudoconvex complex Finsler metric.

{\bf Step 2}.  We prove that $F$ is not strongly convex. It suffices to show at some points, $\tilde{k}\leq 0$. In fact, we have
\begin{align}
\tilde{k}
  &=c_0(c_0+t\phi_s)+2s(t-s)\phi\phi_{ss}\nonumber\\
  &=(4+s^2)(4+s^2-2ts)-4s(t-s)(4-s^2).\label{tk}
\end{align}
By taking $s=\frac{t}{2}$ in \eqref{tk}, we get
\begin{align}
\tilde{k}
  &=\left(4+\frac{t^2}{4}\right)\left(4+\frac{t^2}{4}-t^2\right)-4\times\frac{t}{2}\left(t-\frac{t}{2}\right)\left(4-\frac{t^2}{4}\right)\nonumber\\
  &=16+\frac{1}{16}t^4-6t^2.\label{tk1}
\end{align}
Since $t^2\in[0, 3)$, we can take $t^2=3-\varepsilon$ in \eqref{tk1} for some positive real number $\varepsilon$ small enough, in this case
\begin{align*}
\tilde{k}
  &=16+\frac{1}{16}(3-\varepsilon)^2-6(3-\varepsilon)\\
  &=-\frac{23}{16}+O(\varepsilon)\\
  &<0,
\end{align*}
where $O(\varepsilon)$ is an infinitesimal of first order of $\varepsilon$. This completes the proof.
\end{proof}

\section{$U(n)$-invariant strongly convex real Berwald metric}\label{sec5}

By Remark \ref{rm}, there are lots of $U(n)$-invariant strongly convex complex Finsler metrics on domains $D\subseteq\mathbb{C}^n$, one may ask that among these metrics whether there are real Berwald metrics.
In this section, we shall give a characterization of $U(n)$-invariant strongly convex complex Finsler metrics to be real Berwald metrics.
\begin{proposition}\ \ \label{p6}
Let $F=\sqrt{r\phi(t,s)}$ be a $U(n)$-invariant strongly convex complex Finsler metric on a domain $D \subseteq \mathbb{C}^n$. Then its real spray coefficients $\textbf{G}^i$ are given by
\begin{equation}
\textbf{G}^i=\left[c_1\langle x|u\rangle^2+rc_2\right]x^i+c_1\langle x|u\rangle\langle u|\mathcal{J}x\rangle\mathcal{J}x^i+c_3\langle x|u\rangle u^i+c_4\langle u|\mathcal{J}x\rangle\mathcal{J}u^i,\label{spray}
\end{equation}
where
\begin{eqnarray}
c_1&=&\frac{1}{Lc_0}\Big\{c_0(c_0+t\phi_s)\big[\phi(\phi_{st}+\phi_{ss})-\phi_s(\phi_t+\phi_s)\big]\nonumber\\
    &&\qquad\quad -(t-s)\phi\phi_{ss}\big[s\phi_s(\phi_t+\phi_s)-\phi(\phi_t-\phi_s)\big]\Big\},\label{t1}\\
c_2&=&\frac{s\phi_s(\phi_t+\phi_s)-\phi(\phi_t-\phi_s)}{2c_0(c_0+t\phi_s)},\label{t2}\\
c_3&=&\frac{(c_0+t\phi_s)(\phi_t-s\phi_{st})+s\phi_{ss}[(t-s)\phi_t-\phi]}{c_0(c_0+t\phi_s)+2s(t-s)\phi\phi_{ss}},\label{t3}\\
c_4&=&\frac{\phi_s}{c_0}.\label{t4}
\end{eqnarray}
\end{proposition}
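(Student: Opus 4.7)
The plan is to apply the definition $\textbf{G}^i=\frac{1}{4}g^{ij}(G_{j;k}u^k-G_{;j})$ to $G=r\phi(t,s)$, using the explicit inverse matrix $(g^{ij})$ from Proposition~\ref{p5} and the differential identities for $r,s,t$ collected in Proposition~\ref{p1}. By the chain rule $G_{;j}=2r\phi_t x^j+r\phi_s s_{;j}$, while starting from $G_j=2u^j\phi+r\phi_s s_j$, differentiating in $x^k$ and contracting with $u^k$ (using $t_{;k}u^k=2\langle x|u\rangle$ and $s_{;k}u^k=2\langle x|u\rangle$ from \eqref{c9} together with $s_{j;k}u^k=2x^j-s_{;j}$ from \eqref{c10}) gives
\begin{equation*}
G_{j;k}u^k=4(\phi_t+\phi_s)\langle x|u\rangle u^j+r\phi_s(2x^j-s_{;j})+2r(\phi_{st}+\phi_{ss})\langle x|u\rangle s_j.
\end{equation*}
Subtracting and eliminating $s_{;j}$ via \eqref{c4} produces
\begin{equation*}
A_j:=G_{j;k}u^k-G_{;j}=4\phi_t\langle x|u\rangle u^j+4\phi_s\langle u|\mathcal{J}x\rangle\mathcal{J}u^j+2r(\phi_s-\phi_t)x^j+2r(\phi_{st}+\phi_{ss})\langle x|u\rangle s_j,
\end{equation*}
already a linear combination of $\{u^j,\mathcal{J}u^j,x^j,s_j\}$.

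Next, Proposition~\ref{p5} gives
\begin{equation*}
g^{ij}A_j=\frac{1}{c_0}A_i-\frac{r\phi_{ss}}{2Lc_0}X^i(X^jA_j)-\frac{\phi_s}{c_0(c_0+t\phi_s)}\bigl[x^i(x^jA_j)+\mathcal{J}x^i(\mathcal{J}x^jA_j)\bigr],
\end{equation*}
so the task reduces to evaluating the three scalar contractions $x^jA_j$, $\mathcal{J}x^jA_j$ and $X^jA_j=\phi(s_jA_j)-\frac{2s(t-s)\phi_s}{r}(u^jA_j)$. These are computed using the identities of Proposition~\ref{p1} together with the additional bookkeeping $u^j\mathcal{J}u^j=0$, $x^j\mathcal{J}u^j=-\langle u|\mathcal{J}x\rangle$, $\mathcal{J}x^j\mathcal{J}u^j=\langle x|u\rangle$, and the substitution $\langle u|\mathcal{J}x\rangle^2=rs-\langle x|u\rangle^2$. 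The key qualitative feature is that $\mathcal{J}x^jA_j=4[\phi_t+\phi_s+(t-s)(\phi_{st}+\phi_{ss})]\langle x|u\rangle\langle u|\mathcal{J}x\rangle$ while the $\langle x|u\rangle^2$-coefficient of $x^jA_j$ is exactly the same bracket: this symmetry is precisely what forces a single coefficient $c_1$ in front of both $\langle x|u\rangle^2 x^i$ and $\langle x|u\rangle\langle u|\mathcal{J}x\rangle\mathcal{J}x^i$, while the remaining pure-$r$ part of $x^jA_j$ produces the $rc_2\,x^i$ term.

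Finally I expand $s_i$ and $X^i$ into the basis $\{x^i,\mathcal{J}x^i,u^i,\mathcal{J}u^i\}$ via \eqref{c3} and collect the coefficients of each basis vector in $\textbf{G}^i=\frac{1}{4}g^{ij}A_j$. The $\mathcal{J}u^i$-coefficient comes only from $\frac{1}{4c_0}A_i$ and reads off immediately as $\frac{\phi_s}{c_0}\langle u|\mathcal{J}x\rangle$, giving $c_4$. The main obstacle is the $\langle x|u\rangle u^i$-coefficient, which receives contributions both from $\frac{1}{4c_0}A_i$ (through the $u^i$-piece of $s_i$) and from the $u^i$-component of $-\frac{r\phi_{ss}}{8Lc_0}X^iX^jA_j$. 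Writing $L=(c_0+t\phi_s)D$ with $D=c_0(c_0+t\phi_s)+2s(t-s)\phi\phi_{ss}$ and combining these contributions over the common denominator $c_0D$, one must invoke $\phi-s\phi_s=c_0$ repeatedly to regroup the $\phi_t+\phi_s$ and $\phi_{st}+\phi_{ss}$ terms; after this regrouping a factor of $c_0$ cancels out of the numerator, leaving exactly the form \eqref{t3} for $c_3$. The analogous but simpler collapse for the $x^i$-coefficient identifies $c_1$ and $c_2$ as in \eqref{t1}--\eqref{t2}, completing the proof.
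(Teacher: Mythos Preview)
Your proof is correct and follows essentially the same route as the paper: compute $A_j=G_{j;k}u^k-G_{;j}$ via the chain rule and the identities of Proposition~\ref{p1}, then contract against $x^j$, $\mathcal{J}x^j$ and $X^j$ using the inverse metric of Proposition~\ref{p5}, and finally expand $s_i$, $X^i$ in the basis $\{x^i,\mathcal{J}x^i,u^i,\mathcal{J}u^i\}$ and collect coefficients. The only cosmetic difference is that you keep $s_j$ unexpanded in $A_j$ a bit longer and spell out the algebraic regrouping behind the $c_3$-simplification more explicitly, whereas the paper substitutes \eqref{c3}--\eqref{c4} into $A_j$ immediately (its display (G1)) before contracting.
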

\begin{proof}
The real spray coefficients associated to $F$ are given by
\begin{equation}
\textbf{G}^i=\frac{1}{4}g^{il}\left(G_{l;k}u^k-G_{;l}\right).\label{spr}
\end{equation}
By Proposition \ref{p5}, we have
\begin{equation}
g^{il}=\frac{1}{c_0}\bigg\{\delta_{il}-\frac{r\phi_{ss}}{2L}X^iX^l-\frac{\phi_s}{c_0+t\phi_s}\left(x^ix^l+\mathcal{J}x^i\mathcal{J}x^l\right)\bigg\},\label{gil}
\end{equation}
where $L$ is given by \eqref{l}, and
\begin{equation}
X^i=\phi s_i-\frac{2}{r}s(t-s)\phi_su^i=\frac{2}{r}\phi\Big[\langle x|u\rangle x^i+\langle u|\mathcal{J}x\rangle\mathcal{J}x^i\Big]-\frac{2}{r}s(c_0+t\phi_s)u^i. \label{G0}
\end{equation}
 Since $G=r\phi(t, s)$, a direct calculation yields
\begin{equation}
G_{l;k}u^k-G_{;l}=2\langle x|u\rangle \left[2(\phi_t+\phi_s)u^l+r(\phi_{st}+\phi_{ss})s_l\right]+2r(\phi_s-\phi_t)x^l-2r\phi_ss_{;l}.\label{G}
\end{equation}
By substituting $\eqref{c3}$ and \eqref{c4} into \eqref{G}, we get
\begin{align}
G_{l;k}u^k-G_{;l}
&=4\big[\phi_t-s(\phi_{st}+\phi_{ss})\big]\langle x|u\rangle u^l+4\phi_s\langle u| \mathcal{J}x\rangle \mathcal{J}u^l+2(\phi_s-\phi_t)\langle u|u\rangle x^l\nonumber\\
 &\quad +4(\phi_{st}+\phi_{ss})\big[ \langle x|u\rangle^2x^l+\langle x|u\rangle \langle u|\mathcal{J}x\rangle\mathcal{J}x^l\big].                \label{G1}
\end{align}
By Proposition \ref{p1}, it is easy to check that
\begin{eqnarray*}
\sum\limits^{2n}_{l=1}X^lu^l&=&-2s(t-s)\phi_s,\\
\sum\limits_{l=1}^{2n}X^ls_l&=&\frac{4}{r}s(t-s)\phi, \\
\sum\limits^{2n}_{l=1}X^lx^l&=&\frac{2}{r}(t-s)c_0\langle x| u\rangle , \\
\sum\limits^{2n}_{l=1}X^ls_{;l}&=&-\frac{4}{r}s(t-s)\phi_s\langle x| u\rangle,
\end{eqnarray*}
which together with \eqref{G} imply
\begin{equation}
X^l\left(G_{l;k}u^k-G_{;l}\right)=4(t-s)\big[c_0\phi_s-(\phi+s\phi_s)\phi_t+2s\phi(\phi_{st}+\phi_{ss})\big]\langle x| u\rangle.\label{G2}
\end{equation}
Similarly, we have
\begin{eqnarray}
x^l\left(G_{l;k}u^k-G_{;l}\right)&=&4\big[(\phi_t+\phi_s)+(t-s)(\phi_{st}+\phi_{ss})\big]\langle x| u\rangle^2+2r\big[t(\phi_s-\phi_t)-2s\phi_s\big],\label{G3}\\
Jx^l\left(G_{l;k}u^k-G_{;l}\right)&=&4\big[(\phi_t+\phi_s)+(t-s)(\phi_{st}+\phi_{ss})\big]\langle x|u\rangle\langle u|\mathcal{J}x\rangle.\label{G4}
\end{eqnarray}
By substituting \eqref{gil}, \eqref{G0}, \eqref{G1}-\eqref{G4} into \eqref{spr}, and rearranging the terms according to different types, we obtain \eqref{spray}.
\end{proof}

\begin{theorem}\ \
Let $F=\sqrt{r\phi(t,s)}$ be a $U(n)$-invariant strongly convex complex Finsler metric on a domain $D \subset \mathbb{C}^n$. Then $F$ is a real Berwald metric if and only if $F$ comes from a $U(n)$-invariant Hermitian metric.
\end{theorem}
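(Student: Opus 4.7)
The implication ``comes from a Hermitian metric $\Rightarrow$ real Berwald'' is straightforward: a $U(n)$-invariant Hermitian metric has the form $F^2 = a(t)\|v\|^2 + b(t)|\langle z,v\rangle|^2$ with $a$ and $a+tb$ positive, i.e.\ $\phi(t,s) = a(t) + b(t)s$. This is a Riemannian metric on the underlying $2n$-dimensional real manifold, and every Riemannian metric has real spray coefficients that are quadratic in the velocity, hence is automatically a real Berwald metric. So the substantive work lies in the converse.

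For the converse, assume $F$ is a real Berwald metric and aim to show $\phi_{ss}\equiv 0$. The case $n=1$ is trivial since $s\equiv t$ reduces $F$ to a conformal Euclidean metric, so assume $n\ge 2$. By Proposition \ref{p6} the spray is given by \eqref{spray} with coefficients $c_1,c_2,c_3,c_4$ as in \eqref{t1}--\eqref{t4}. Using the $U(n)$-invariance, evaluate the spray at a base point of the form $z_0=(R,0,\ldots,0)\in D$ with $R>0$. At $z_0$ one has $t=R^2$ and, under the conventions $\mathcal{J}x^\alpha=x^{\alpha+n}$ and $\mathcal{J}x^{\alpha+n}=-x^\alpha$ from Section \ref{sec3}, the only nonzero components of $x$ and $\mathcal{J}x$ are $x^1=R$ and $\mathcal{J}x^{1+n}=-R$. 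Consequently, for any index $i=\alpha+n$ with $\alpha\in\{2,\ldots,n\}$, both $x^{\alpha+n}$ and $\mathcal{J}x^{\alpha+n}$ vanish and \eqref{spray} collapses to
\begin{equation*}
\textbf{G}^{\alpha+n}=Rc_3(t,s)\,u^1 u^{\alpha+n}+Rc_4(t,s)\,u^{1+n}u^\alpha.
\end{equation*}

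The crux is an isolation argument. I would restrict the velocity $u$ to the two-dimensional slice on which only $u^{1+n}$ and $u^\alpha$ are nonzero; on this slice the $c_3$-term drops out (as $u^1=0$) and $\textbf{G}^{\alpha+n}|_{\text{slice}}=Rc_4(t,s)\,u^{1+n}u^\alpha$ with $s=t(u^{1+n})^2/[(u^{1+n})^2+(u^\alpha)^2]$. Under the Berwald hypothesis, this restriction must be a homogeneous quadratic polynomial in the two variables $(u^{1+n},u^\alpha)$. Since it vanishes identically along each of the coordinate axes of the slice, it must be a constant multiple of $u^{1+n}u^\alpha$, which forces $c_4(t,s)$ to be independent of $s$ over the full range $s\in[0,t]$. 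A one-line differentiation gives $\partial_s c_4=\phi\phi_{ss}/(\phi-s\phi_s)^2$, so combined with $\phi>0$ this forces $\phi_{ss}\equiv 0$. Integrating twice in $s$ yields $\phi(t,s)=a(t)+b(t)s$, whence $F^2=a(t)\|v\|^2+b(t)|\langle z,v\rangle|^2$ is the squared norm of the $U(n)$-invariant Hermitian metric with matrix $h_{i\bar j}(z)=a(\|z\|^2)\delta_{ij}+b(\|z\|^2)\bar{z}^i z^j$; its positive definiteness follows from the strong convexity conditions of Theorem \ref{iffsc}.

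The principal obstacle is precisely this isolation step: in \eqref{spray} the four ``building blocks'' $x^i,\mathcal{J}x^i,u^i,\mathcal{J}u^i$ are not pointwise linearly independent, so the Berwald condition does not immediately decouple the four coefficients $c_1,c_2,c_3,c_4$. The combined choice of base point ($z_0$ along the first coordinate axis), index ($i=\alpha+n$ with $\alpha\ge 2$) and velocity slice (only $u^{1+n}$ and $u^\alpha$ nonzero) is engineered precisely to annihilate every contribution to $\textbf{G}^{\alpha+n}$ except the $c_4$-term; once this isolation is in place, the remainder of the argument is elementary polynomial algebra followed by a one-variable ODE in $s$.
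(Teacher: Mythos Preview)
Your proof is correct and follows the same route as the paper: both use Proposition \ref{p6} to express the spray, argue that the Berwald hypothesis forces $c_4=\phi_s/c_0$ to be independent of $s$, and then compute $\partial_s c_4=\phi\phi_{ss}/c_0^2$ to conclude $\phi_{ss}\equiv 0$. The paper simply asserts that $c_1,c_3,c_4$ must be $s$-independent from the shape of \eqref{spray}, whereas your choice of base point $z_0=(R,0,\ldots,0)$, index $i=\alpha+n$ with $\alpha\ge 2$, and velocity slice supplies an explicit isolation of the $c_4$-term that makes this step rigorous.
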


\begin{proof}
If $F$ comes from a $U(n)$-invariant Hermitian metric we have $\phi_{ss}=0$ and consequently
$$
g_{ij}=c_0\delta_{ij}+\phi_s(x^ix^j+\mathcal{J}x^i\mathcal{J}x^j).
$$
It is clear that
$$
\frac{\partial c_0}{\partial s}=-s\phi_{ss}=0,\quad\frac{\partial \phi_s}{\partial s}=\phi_{ss}=0.
$$
Thus $F$ is a real Berwald metric. Conversely, If $F$ is a real Berwald metric, then the real spray coefficients $\textbf{G}^i$ of $F$ are quadratic with respect to the tangent directions $u$. If follows from \eqref{spray} that $c_1$, $c_3$ and $c_4$ are necessary independent of $s$, i.e.,
\begin{equation}
 \frac{ \partial c_1}{\partial s}=0, \quad \frac{ \partial c_3}{\partial s}=0,\quad \frac{ \partial c_4}{\partial s}=0.\label{cc}
\end{equation}
Next since
$$
rs=\langle x| u\rangle^2+\langle u| \mathcal{J}x\rangle^2
$$
are quadratic with respect to $u$, thus $c_2$ is at most linear with respect to $s$, i.e.,
\begin{equation}
 \frac{ \partial^2 c_2}{\partial s^2}=0.\label{cc1}
\end{equation}

But the equality
$$
0=\frac{\partial c_4}{\partial s}=\frac{\phi\phi_{ss}}{c_0^2}
$$
implies
$\phi_{ss}=0$, i.e., $F$ comes from a $U(n)$-invariant Hermitian metric. In this case, equality \eqref{cc1} and the rest three equalities in \eqref{cc}  hold automatically. This completes the proof.
\end{proof}

\section{Holomorphic sectional curvature of $U(n)$-invariant complex Finsler metrics}\label{sec6}

A strongly pseudoconvex complex Finsler metric is called a weakly complex Berwald metric if its complex Berwald connection coefficients $\mathbb{G}_{\beta\gamma}^\alpha(z,v)=\mathbb{G}_{\beta\gamma}^\alpha(z)$, that is, locally $\mathbb{G}_{\beta\gamma}^\alpha$ are independent of fiber coordinates. This notion was first introduced in \cite{Zh2}. It is easy to check that a strongly pseudoconvex complex Finsler metric is a weakly complex Berwald metric if and only if locally its complex geodesic coefficients $\mathbb{G}^i$ are quadratic and holomorphic with respect to fiber coordinates $v$, that is,
$$\mathbb{G}_{\beta\gamma}^\alpha=\mathbb{G}_{\beta\gamma}^\alpha(z)v^\beta v^\gamma.$$
So that weakly complex Berwald metrics are those complex Finsler metrics whose behavior of geodesic are the most closest to Hermitian metrics.

Let $F=\sqrt{r\phi(t,s)}$ be a $U(n)$-invariant strongly pseudoconvex complex Finsler metric defined on a domain $D\subseteq\mathbb{C}^n$. It was proved in \cite{Zh1} that
 $F$ is a weakly complex Berwald metric if and only if $\phi$ satisfies the following nonlinear PDE
$$
\phi(\phi_{st}+\phi_{ss})-\phi_s(\phi_t+\phi_s)=g(t)\Big[c_0(c_0+t\phi_s)+s(t-s)\phi\phi_{ss}\Big]
$$
for a real-valued smooth function $g(t)$. For $U(n)$-invariant weakly complex Berwald metrics, its holomorphic sectional curvature along a nonzero tangent direction $v\in T_z^{1,0}D$ was given in \cite{Zh1},
$$
K_F(z,v)=-\frac{2}{\phi^2}\bigg\{\phi\Big[s\frac{\partial k_2}{\partial t}+k_2\Big]+s(c_0+t\phi_s)\big[sg'(t)+2g(t)\big]\bigg\},
$$
where
\begin{eqnarray*}
k_2&=&\frac{1}{k_1}\Big\{[\phi+(t-s)\phi_s+s(t-s)\phi_{ss}](\phi_t+\phi_s)-s[\phi+(t-s)\phi_s](\phi_{st}+\phi_{ss})\Big\},\\
\frac{\partial k_2}{\partial t}
&=&\frac{1}{\phi^2}\Big\{\phi(\phi_{tt}+\phi_{st})-\phi_t(\phi_t+\phi_s)-sg(t)\big[\phi(\phi_t+\phi_s)+(t-s)\phi\phi_{st}-\phi_t(c_0+t\phi_s)\big]\\
       && -s\phi g'(t)\big[\phi+(t-s)\phi_s\big]\Big\}.
\end{eqnarray*}

\begin{theorem}\label{vc}
Suppose that $F(z,v)=\sqrt{r\phi(t,s)}$ is a $U(n)$-invariant strongly pseudoconvex complex Finsler metric on a domain $D\subseteq\mathbb{C}^n$. Then  $F$ is a weakly complex Berwald metric with vanishing holomorphic sectional curvature if and only if $F=\sqrt{rf(s-t)}$ for some smooth positive function $f(w)$ with $w=s-t$.
\end{theorem}
\begin{proof}
The sufficiency follows from  Theorem 4.1 in \cite{XZ2} and the necessity follows from Theorem 5.8 in \cite{XZ1}.
\end{proof}

For an arbitrary $U(n)$-invariant strongly pseudoconvex complex Finsler metric $F=\sqrt{r\phi(t,s)}$, we obtain the following explicit formula for its holomorphic sectional curvature.
\begin{theorem}\ \ \label{th2}
Let $F(z,v)=\sqrt{r\phi(t,s)}$ be a $U(n)$-invariant  strongly pseudoconvex  complex Finsler metric. Then the holomorphic sectional curvature $K_F(z,v)$ of $F$ along a nonzero tangent direction $v\in T_z^{1,0}D$ is given by
\begin{align}
K_F(z,v)&=-\frac{2}{\phi^2k_1}\bigg\{k_1\Big[s(\phi_{tt}+2\phi_{st}+\phi_{ss})+(\phi_t+\phi_s)\Big]-s^2(t-s)\phi(\phi_{st}+\phi_{ss})^2\nonumber\\
 &\quad+2s^2(t-s)\phi_s(\phi_{st}+\phi_{ss})(\phi_t+\phi_s)-s\Big[c_0+(t-s)\phi_s+s(t-s)\phi_{ss}\Big](\phi_t+\phi_s)^2\bigg\},\label{kf}
\end{align}
where $c_0$ and $k_1$ are given by \eqref{ck}.
Especially, if $0\in D$, then at the origin the holomorphic sectional curvature of $F$ is always a constant along any nonzero tangent direction, i.e.,
 $$
 K_F(0,v)=\frac{-2[\phi_t(0,0)+\phi_s(0,0)]}{\phi^2(0,0)}=\mbox{constant},\quad \forall 0 \neq  v\in T_0^{1,0}D.
 $$
\end{theorem}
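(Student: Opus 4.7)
The plan is to begin from the standard expression for the holomorphic sectional curvature of a strongly pseudoconvex complex Finsler metric,
$$K_F(z,v)=-\frac{2}{G(z,v)^2}\bigl\{G_{;\mu\bar\nu}-G^{\bar\tau\gamma}G_{\bar\tau;\mu}G_{\gamma;\bar\nu}\bigr\}v^\mu\bar v^\nu,$$
and to carry out the contractions using the ansatz $G=r\phi(t,s)$. Because $G$ depends on $z$ only through $t$ and $s$, the chain rule expresses $G_{;\mu\bar\nu}$ as a linear combination of $\phi_{tt},\phi_{ts},\phi_{ss},\phi_t,\phi_s$ with coefficients polynomial in $z,\bar z,v,\bar v$. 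Using the identities $t_{;\mu}=\bar z^\mu$, $t_{;\mu\bar\nu}=\delta_{\mu\nu}$, $s_{;\mu}=\bar v^\mu\bar\xi/r$, $s_{;\bar\nu}=v^\nu\xi/r$, $s_{;\mu\bar\nu}=\bar v^\mu v^\nu/r$ (with $\xi=\langle z,v\rangle$, so $|\xi|^2=rs$), together with $t_{;\mu}v^\mu=s_{;\mu}v^\mu=\bar\xi$, a direct computation gives
$$G_{;\mu\bar\nu}\,v^\mu\bar v^\nu=r^2\bigl[s(\phi_{tt}+2\phi_{st}+\phi_{ss})+(\phi_t+\phi_s)\bigr],$$
which, after multiplying and dividing by $k_1$ inside $-2/(G^2 k_1)\cdot k_1$, produces the first block inside the braces of \eqref{kf}.

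Next I would insert the explicit form of the inverse Levi matrix. As exploited in the strong pseudoconvexity proof, the Levi matrix decomposes as $H=c_0 I_n+BXB^{*}$ with $B,X,B^{*}$ the rank-two data of \eqref{nt}. A Sherman\textendash Morrison\textendash Woodbury inversion, formally identical to the real computation carried out in Proposition \ref{p5} but with $2\times 2$ rather than $3\times 3$ inner blocks, yields
$$G^{\bar\tau\gamma}=\frac{1}{c_0}\delta^{\bar\tau\gamma}-\frac{1}{c_0}\bigl[B(c_0 I_2+B^{*}BX)^{-1}XB^{*}\bigr]^{\bar\tau\gamma},$$
whose entries are rational in $\phi,\phi_s,\phi_{ss},t,s$ with common denominator precisely $k_1$; this is the origin of the $k_1$ in the denominator of \eqref{kf}. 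Substituting this together with the computed $G_{\bar\tau;\mu}v^\mu$ and $G_{\gamma;\bar\nu}\bar v^\nu$ into the correction $G^{\bar\tau\gamma}G_{\bar\tau;\mu}G_{\gamma;\bar\nu}v^\mu\bar v^\nu$ and expanding produces exactly the three remaining terms $-s^2(t-s)\phi(\phi_{st}+\phi_{ss})^2$, $+2s^2(t-s)\phi_s(\phi_{st}+\phi_{ss})(\phi_t+\phi_s)$, and $-s\bigl[c_0+(t-s)\phi_s+s(t-s)\phi_{ss}\bigr](\phi_t+\phi_s)^2$ displayed in the theorem.

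The main obstacle is purely computational: the contraction $G^{\bar\tau\gamma}G_{\bar\tau;\mu}G_{\gamma;\bar\nu}v^\mu\bar v^\nu$ spawns roughly a dozen monomials in the derivatives of $\phi$, and the simplification hinges on repeated use of $|\xi|^2=rs$ together with the algebraic relations among $c_0$, $c_0+t\phi_s$ and $k_1$ established in Section \ref{sec3}. Careful bookkeeping by the natural groupings $c_0$, $k_1$ and $c_0+(t-s)\phi_s+s(t-s)\phi_{ss}$ is what forces the twelve-term expansion to collapse into the four-term expression stated. Assembling the two contributions and dividing by $G^2=r^2\phi^2$ factors out the prefactor $-2/(\phi^2 k_1)$ and yields the announced formula \eqref{kf}.

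For the value at the origin, one sets $z=0$: then $t=0$ and $s=|\langle z,v\rangle|^2/r=0$ for every $v\neq 0$. Every term of \eqref{kf} carrying a factor $s$, $(t-s)$ or $s(t-s)$ vanishes identically, while $k_1|_{(0,0)}=\phi(0,0)^2$, so the formula collapses to
$$K_F(0,v)=-\frac{2[\phi_t(0,0)+\phi_s(0,0)]}{\phi(0,0)^2},$$
which is manifestly independent of the tangent direction $v\in T_0^{1,0}D\setminus\{0\}$.
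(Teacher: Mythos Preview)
Your proposal is correct. The starting identity you use,
\[
K_F=-\frac{2}{G^2}\bigl\{G_{;\mu\bar\nu}-G^{\bar\tau\gamma}G_{\bar\tau;\mu}G_{\gamma;\bar\nu}\bigr\}v^\mu\bar v^\nu,
\]
is equivalent to the paper's formula $K_F=-\frac{2}{G^2}G_\gamma\,\partial_{\bar\nu}(2\mathbb{G}^\gamma)\bar v^\nu$: expanding $2\mathbb{G}^\gamma=G^{\bar\tau\gamma}G_{\bar\tau;\mu}v^\mu$, using $G_\gamma G^{\bar\tau\gamma}=\bar v^\tau$ and the Euler relations $\bar v^\tau G_{\bar\tau;\mu\bar\nu}=G_{;\mu\bar\nu}$, $G_{\alpha\bar\beta;\bar\nu}\bar v^\beta=G_{\alpha;\bar\nu}$, one passes from one to the other. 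The difference between your argument and the paper's is organisational rather than substantive. The paper never writes down $G^{\bar\tau\gamma}$ via Woodbury; instead it imports from \cite{Zh1} the closed form
\[
2\mathbb{G}^\gamma=k_2\,\overline{\langle z,v\rangle}\,v^\gamma+k_3\,\overline{\langle z,v\rangle}^{\,2}z^\gamma,\qquad k_2=\frac{k_4}{k_1},\quad k_3=\frac{k_5}{k_1},
\]
differentiates $k_2,k_3$ with respect to $\bar z^\nu$ through $t$ and $s$, and contracts with $G_\gamma$ and $\bar v^\nu$ using $G_\gamma v^\gamma=G$ and $G_\gamma z^\gamma=(c_0+t\phi_s)\langle z,v\rangle$. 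This is more economical: the inversion of the Levi matrix is already absorbed into the scalars $k_2,k_3$, so the computation reduces to a handful of one-variable chain-rule terms and two contractions, and the collapse to \eqref{kf} comes from the explicit relations among $k_1,k_4,k_5$. Your route is more self-contained, since it does not presuppose the spray formula, at the cost of the heavier bookkeeping you acknowledge in the correction term. The specialisation at $z=0$ is handled identically in both arguments.
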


\begin{proof}
The holomorphic sectional curvature $K_F$ of a strongly pseudoconvex complex Finsler metric $F$ can be expressed as (see \cite{Zh1})
\begin{equation}
K_F=-\frac{2}{G^2}G_\gamma \partial_{\overline{\nu}}(2\mathbb{G}^\gamma)\overline{v^\nu},\label{h1}
\end{equation}
where
$$G_\gamma=\frac{\partial G}{\partial v^\gamma},\qquad \partial_{\overline{\nu}}=\frac{\partial}{\partial\overline{v^\nu}},
$$
and $\mathbb{G}^\gamma$ denotes the complex geodesic spray coefficients associated to $F$.
By (3.14) and (3.15) in \cite{Zh1}, we have
$$
2\mathbb{G}^\gamma=k_2\overline{\langle z,v\rangle}v^\gamma+k_3(\overline{\langle z,v\rangle})^2z^\gamma, \qquad \gamma=1, 2, \cdots, n.
$$
where
\begin{align}
k_2&=\frac{k_4}{k_1},\quad\qquad k_3=\frac{k_5}{k_1},\label{k2k3}\\
k_4&=\big[c_0+t\phi_s+s(t-s)\phi_{ss}\big](\phi_t+\phi_s)-s(c_0+t\phi_s)(\phi_{st}+\phi_{ss}),\label{k4}\\
k_5&=\phi(\phi_{st}+\phi_{ss})-\phi_s(\phi_t+\phi_s).\label{k5}
\end{align}
It is easy to check that
\begin{eqnarray}
\partial_{\overline{\nu}}(2\mathbb{G}^\gamma)&=&\frac{\partial k_2}{\partial t}z^\nu\overline{\langle z,v\rangle}v^\gamma+\frac{\partial k_2}{\partial s}\frac{\langle z,v\rangle v^\nu}{r}\overline{\langle z,v\rangle}v^\gamma+k_2v^\nu v^\gamma\nonumber\\
& &+\frac{\partial k_3}{\partial t}z^\nu(\overline{\langle z,v\rangle})^2z^\gamma +\frac{\partial k_3}{\partial s}\frac{\langle z,v\rangle}{r}v^\nu(\overline{\langle z,v\rangle})^2z^\gamma+2k_3\overline{\langle z,v\rangle}v^\nu z^\gamma.\label{gr}
\end{eqnarray}
Note that
\begin{align*}
&\sum\limits^n_{\nu=1}z^\nu\overline{v^\nu}=\langle z,v\rangle,\quad \sum\limits^n_{\nu=1}v^\nu\overline{v^\nu}=r,\quad \langle z,v\rangle\overline{\langle z,v\rangle}=rs,\\
&G_\gamma v^\gamma=G,\qquad G_\gamma z^\gamma=(\overline{v^\gamma}\phi+r\phi_ss_\gamma)z^\gamma=(c_0+t\phi_s)\langle z,v\rangle.
\end{align*}
Thus contracting \eqref{gr} with $\overline{v^\nu}$ and $G_\gamma$ successively yields
\begin{equation}
G_\gamma \partial_{\overline{\nu}}(2\mathbb{G}^\gamma)\overline{v^\nu}=r^2\bigg[\Big(\frac{\partial k_2}{\partial t}s+\frac{\partial k_2}{\partial s}s+k_2 \Big)\phi+s\Big(\frac{\partial k_3}{\partial t}s+\frac{\partial k_3}{\partial s}s+2k_3\Big)(c_0+t\phi_s)\bigg].\label{grr}
\end{equation}
Substituting \eqref{grr} and \eqref{k2k3}-\eqref{k5} into \eqref{h1} and rearranging terms, we obtain \eqref{kf}.

Furthermore, if $0\in D$, then at the origin $z=0$, we have $t=s=0$, which implies $k_1=\phi^2(0,0)$. Thus
$$
K_F(0,v)=-\frac{2}{\phi^2}(\phi_t+\phi_s)\Big|_{(t,s)=(0,0)}=\frac{-2[\phi_t(0,0)+\phi_s(0,0)]}{\phi^2(0,0)}=\mbox{constant}.
$$
This completes the proof.
\end{proof}

Hopefully, Theorem \ref{th2} shall find its independent interesting in looking for examples of $U(n)$-invariant complex Finsler metrics with specific property of holomorphic sectional curvatures.

\begin{example}\ \
Let $F=\sqrt{r\phi(t,s)}$ be a $U(n)$-invariant complex Finsler metric with $\phi(t,s)=\frac{(1-t+s)^2}{(1-t)^3}$. Then $F$ is a strongly convex weakly complex Berwald metric  on the open unit ball $\mathbb{B}^n\subset\mathbb{C}^n$ satisfying
\begin{eqnarray*}
K_F(z,v)&=&\frac{-6(1-t)}{1-t+s}\geq -6,\quad \forall z\in \mathbb{B}^n,0\neq v\in T_z^{1,0}\mathbb{B}^n,\\
K_F(0,v)&\equiv& -6,\quad \forall 0\neq v\in T_0^{1,0}\mathbb{B}^n.
\end{eqnarray*}
\end{example}

\begin{example}\cite{XZ2}
Let $F=\sqrt{rf(s-t)}$ be a $U(n)$-invariant strongly pseudoconvex complex Finsler metric defined on a domain $D\subseteq \mathbb{C}^n$. Then the holomorphic curvature of $F$ vanishes identically, i.e.,
$$
K_F(z,v)\equiv 0,\quad \forall z\in D, 0\neq v\in T_z^{1,0}D.
$$
\end{example}

\begin{example}\cite{XZ3}
Let $F=\sqrt{r\phi(t,s)}$ be a $U(n)$-invariant complex Finsler metric with $\phi(t,s)=\frac{(1+t-s)^2}{(1+t)^3}$. Then $F$ is a strongly convex weakly complex Berwald metric on the open unit ball $\mathbb{B}^n\subset\mathbb{C}^n$ satisfying
\begin{eqnarray*}
K_F(z,v)&=&\frac{6(1+t)}{1+t-s}\geq 6,\quad z\in \mathbb{B}^n, 0\neq v\in T_z^{1,0}\mathbb{B}^n,\\
 K_F(0,v)&\equiv& 6,\quad\forall 0\neq v\in T_0^{1,0}\mathbb{B}^n.
 \end{eqnarray*}
\end{example}

\section{The real geodesics on the unit sphere $\pmb{S}^{2n-1}\subset\mathbb{C}^n$}\label{sec7}



In \cite{SR}, S. Dragomir and R. Grimaldi posed an open problem to study the real (or complex geodesic) geodesics of the following complex Wrona metric
$$F(z,v)=\frac{\|v\|^4}{\|z\|^2\|v\|^2-|\langle z,v\rangle|^2}$$
on $\mathbb{C}^n$. For a parameterized curve $\gamma:[a,b]\rightarrow \mathbb{C}^n$ with $\tau_i=a+\frac{b-a}{m}i,i=0,1,\cdots,m-1$, they defined the length of $\gamma$
with respect to the complex Wrona metric by
$$
L(\gamma)=\lim_{m\rightarrow \infty}\sum_{i=0}^{m-1}F(\gamma(\tau_i),\;\gamma(\tau_{i+1})-\gamma(\tau_i)).
$$

In this section we consider the real geodesic of $U(n)$-invariant complex Finsler metrics on $\mathbb{C}^n$.

 S. Dragomir and R. Grimaldi \cite{SR} obtained the following interesting property of the complex Wrona metric.
\begin{proposition}[\cite{SR}]\ \
Let $0<\alpha<\frac{\pi}{2}$ and $\gamma(\tau)(0\leq \tau\leq\alpha)$ be any real geodesic of the complex Wrona metric on the unit sphere $\pmb{S}^{2n-1}\subset\mathbb{C}^n$, which is parametrized by arc length. Then
$L(\gamma)=\alpha$.
\end{proposition}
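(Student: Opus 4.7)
The Wrona metric fits the framework of this paper with $\phi(t,s)=\frac{1}{t-s}$; note in particular that it satisfies $\phi_t+\phi_s\equiv 0$ (hence by Theorem \ref{vc} has vanishing holomorphic sectional curvature) and the normalization $\phi(1,0)=1$. The plan is to combine the explicit real spray formula of Proposition \ref{p6} with the sphere constraint $\|\gamma(\tau)\|^2\equiv 1$ to extract enough conservation laws that the Finsler length reads off directly from the arc-length parametrization.

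First I would use $t\equiv 1$ and $\langle \gamma|\dot\gamma\rangle\equiv 0$ to simplify the formula of Proposition \ref{p6}: the three terms carrying $\langle x|u\rangle$ vanish and the spray collapses to
\[
\textbf{G}^i = r\,c_2\,x^i + c_4\,\langle u|\mathcal{J}x\rangle\,\mathcal{J}u^i,
\]
with the Wrona-specific evaluations $c_0=\frac{1-2s}{(1-s)^2}$, $c_4=\frac{1}{1-2s}$ and $c_2=\frac{1}{2(1-2s)}$. Contracting the geodesic equation $\ddot\gamma^i=-2\textbf{G}^i$ against $u^i$ and $\mathcal{J}x^i$ respectively, and using the antisymmetries $\langle u|\mathcal{J}u\rangle=0$, $\langle \mathcal{J}x|x\rangle=0$ together with $\langle u|x\rangle=\langle \mathcal{J}x|\mathcal{J}u\rangle=0$, one reads off $\frac{d}{d\tau}\|\dot\gamma\|^2=0$ and $\frac{d}{d\tau}\langle \mathcal{J}\gamma|\dot\gamma\rangle=0$, so both $r$ and $s=\langle \mathcal{J}\gamma|\dot\gamma\rangle^2/r$ are constants of motion. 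Self-consistency of the sphere constraint itself then follows from the Wrona-specific cancellation
\[
\frac{d}{d\tau}\langle x|u\rangle = r\bigl(1-2c_2+2c_4 s\bigr) = r\cdot\frac{(1-2s)-1+2s}{1-2s}=0,
\]
which shows that the submanifold $\{t=1,\ \langle x|u\rangle=0\}$ of phase space is invariant under the geodesic flow; a geodesic starting tangent to $\pmb{S}^{2n-1}$ therefore stays on $\pmb{S}^{2n-1}$.

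Once the geodesic is pinned to the sphere, the conclusion is immediate: $F(\gamma,\dot\gamma)$ is constant along any Finsler geodesic, and the arc-length parametrization fixes this constant to $1$, whence
\[
L(\gamma)=\int_0^\alpha F(\gamma(\tau),\dot\gamma(\tau))\,d\tau=\alpha.
\]
The main step requiring genuine work is the Wrona-specific identity $1-2c_2+2c_4 s\equiv 0$ that makes the sphere invariant under the geodesic flow; without this miraculous cancellation a generic geodesic would drift off $\pmb{S}^{2n-1}$ and the whole argument would collapse. The bound $\alpha<\frac{\pi}{2}$ is presumably needed to keep the geodesic away from the Hopf direction $v\parallel iz$, along which the Wrona metric becomes singular, and verifying that real geodesics do not hit this singular locus for $\tau\in[0,\frac{\pi}{2})$ is the other delicate point of the proof.
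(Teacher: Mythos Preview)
Your route is quite different from the paper's, and most of the effort you expend is orthogonal to what the statement actually asks.

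The paper (in its proof of Theorem \ref{th7}, of which this Proposition is the special case $\phi(t,s)=\tfrac{1}{t-s}$) never touches the geodesic equation. It simply writes the geodesic explicitly as a great circle $\gamma(\tau)=z\cos\tau+w\sin\tau$ with $z,w\in\pmb{S}^{2n-1}$ and $\langle z,w\rangle=0$, then evaluates the chord-sum length
\[
L(\gamma)=\lim_{m\to\infty}\sum_{i=0}^{m-1}F\bigl(\gamma(\tau_i),\,\gamma(\tau_{i+1})-\gamma(\tau_i)\bigr)
\]
by hand: $\|\gamma(\tau_{i+1})-\gamma(\tau_i)\|^2=4\sin^2\tfrac{\alpha}{2m}$, $t=1$, $s=\sin^2\tfrac{\alpha}{2m}$, so the sum is $2m\sin\tfrac{\alpha}{2m}\sqrt{\phi(1,\sin^2\tfrac{\alpha}{2m})}\to\alpha\sqrt{\phi(1,0)}=\alpha$. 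No spray, no conservation laws.

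Your argument has two difficulties. First, Proposition~\ref{p6} requires strong convexity, and the Wrona function $\phi=\tfrac{1}{t-s}$ fails Theorem~\ref{iffsc}: $c_0=\tfrac{t-2s}{(t-s)^2}<0$ once $s>t/2$, and the metric is singular at $s=t$. So the spray formula you invoke is only formally available, and your ``Wrona-specific identity'' $1-2c_2+2c_4s\equiv0$ (though the algebra is correct) sits on shaky foundations. Second, your concluding step is either circular or incomplete. If ``parametrized by arc length'' means $F$-arc-length, then $\int_0^\alpha F(\gamma,\dot\gamma)\,d\tau=\alpha$ is a tautology for \emph{any} curve, and all your work on the invariance of $\pmb{S}^{2n-1}$ under the geodesic flow is irrelevant to the conclusion. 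If instead it means Euclidean arc length (which is how the paper reads it, and why the normalization $\phi(1,0)=1$ matters), then you still owe the computation of $F(\gamma,\dot\gamma)$ along the curve; that is precisely the elementary evaluation the paper carries out directly, and your invariance argument does not supply it. Note also that $L(\gamma)$ is defined in \cite{SR} and in this paper via the discrete chord sum, not the integral you write down; for the Wrona metric, which is singular and not $1$-homogeneous as displayed, the equivalence of the two is itself part of what needs checking.
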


 Note that the complex Wrona metric is a $U(n)$-invariant complex Finsler metric which is singular on the set $\varOmega=\{(z,v)\in T^{1,0}D|z=\lambda v,\lambda \in\mathbb{C}\}$. It can be rewritten as
 $F(z,v)=r\phi(t,s)$ with $\phi(t,s)=\frac{1}{t-s}$.

 The following theorem shows that for any $U(n)$-invariant complex Finsler metrics defined on $\mathbb{C}^n$ their real geodesics share the same property as the complex Wrona metric. More precisely, we have
\begin{theorem}\label{th7}
  Suppose that $F(z,v)=\sqrt{r\phi(t,s)}$ is a $U(n)$-invariant complex Finsler metric defined on $\mathbb{C}^n$ which is normalized such that $\phi(1,0)=1$. Let $0<\alpha<\frac{\pi}{2}$ and $\gamma(\tau)(0\leq \tau\leq\alpha)$ be a real geodesic of $F$ on the unit sphere $\pmb{S}^{2n-1}\subset\mathbb{C}^n$ which is parameterized by arc length. Then
$$
L(\gamma)=\alpha.
$$
\end{theorem}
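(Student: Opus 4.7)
The plan is to show that $F(\gamma(\tau),\dot\gamma(\tau))\equiv 1$ along $\gamma$, after which $L(\gamma)=\alpha$ follows by integration. I would record two complementary routes to this identity.

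The direct route is nearly a tautology: since $F=\sqrt{r\phi(t,s)}$ is homogeneous of degree one in $v$, the Dragomir--Grimaldi secant-sum $\lim_m\sum F(\gamma(\tau_i),\gamma(\tau_{i+1})-\gamma(\tau_i))$ converges to the classical Finsler integral $L(\gamma)=\int_0^\alpha F(\gamma,\dot\gamma)\,d\tau$, and the hypothesis that $\gamma$ is parameterized by arc length means by definition that $F(\gamma(\tau),\dot\gamma(\tau))\equiv 1$ on $[0,\alpha]$. Integrating yields the conclusion at once.

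The geometric route makes the normalization $\phi(1,0)=1$ and the bound $\alpha<\pi/2$ transparent and exhibits a concrete geodesic realizing the identity. First I would use $U(n)$-invariance of $F$ together with its invariance under complex conjugation $K\colon z\mapsto\bar z$ (valid because $r,t,s$ depend only on moduli) to normalize $\gamma(0)=e_1$ via $U(n)$-transitivity on $\pmb{S}^{2n-1}$ and $\dot\gamma(0)=e_2$ via a $U(n-1)$-rotation on the orthogonal complement of $e_1$. The subgroup $U(n-2)\times\langle K\rangle$ then fixes $(e_1,e_2)$ pointwise and has fixed-point set exactly the real $2$-plane $P=\operatorname{span}_{\mathbb{R}}\{e_1,e_2\}$; since isometries map geodesics to geodesics, uniqueness of geodesics with prescribed initial data forces $\gamma\subset P$. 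Hence $\gamma$ is an arc of the unit circle $P\cap\pmb{S}^{2n-1}$, say $\gamma(\tau)=\cos\theta(\tau)\,e_1+\sin\theta(\tau)\,e_2$. A direct computation gives $t=1$, $r=\theta'(\tau)^2$ and $\langle\gamma,\dot\gamma\rangle=0$, hence $s=0$ along $\gamma$, so the arc-length condition combined with $\phi(1,0)=1$ reduces to $|\theta'(\tau)|=1$, forcing $\theta(\tau)=\tau$; the bound $\alpha<\pi/2$ keeps the arc embedded. Integrating $F\equiv 1$ recovers $L(\gamma)=\alpha$.

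The main obstacle lies in the second step of the geometric route. The subgroup $U(n-1)$ only rotates the component of $\dot\gamma(0)$ lying in the orthogonal complement of $e_1$, so any residual ``$ie_1$''-component of $\dot\gamma(0)$ is $U(n-1)$-invariant (and is reversed by $K$), and the fixed-plane argument as stated applies only to tangents with no such component. For tangents with a nonzero $ie_1$-component the fixed set enlarges from the real 2-plane $P$ to the complex 2-plane $\mathbb{C}^2\subset\mathbb{C}^n$, in which case the geodesic need not be a real great circle and the explicit curve above is no longer available; one must then fall back on the direct route, where the identity $F\equiv 1$ holds by the very definition of arc-length parametrization and gives $L(\gamma)=\alpha$ without appeal to any specific form of $\gamma$.
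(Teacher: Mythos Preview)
Your direct route is correct, and it is genuinely different from the paper's argument. The paper does not pass through the integral at all: it simply \emph{writes down} the curve $\gamma(\tau)=z\cos\tau+w\sin\tau$ with $z,w\in\pmb{S}^{2n-1}$ and $\langle z,w\rangle=0$ (Hermitian-orthogonal, not merely real-orthogonal), and then evaluates the Dragomir--Grimaldi secant sum by hand. One gets $\|\gamma(\tau_{i+1})-\gamma(\tau_i)\|^2=2(1-\cos\frac{\alpha}{m})$ and $s$-argument $\sin^2\frac{\alpha}{2m}$, so each summand is $2\sin\frac{\alpha}{2m}\sqrt{\phi(1,\sin^2\frac{\alpha}{2m})}$ and the total tends to $\alpha\sqrt{\phi(1,0)}=\alpha$. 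This explicit computation is what makes the normalization $\phi(1,0)=1$ visibly enter; in your direct route that normalization is absorbed into the hypothesis ``parameterized by arc length,'' which is why your argument reads as nearly tautological.

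The paper never justifies why a geodesic on $\pmb{S}^{2n-1}$ must have this great-circle form with Hermitian-orthogonal tangent; it simply takes that as given, following the Dragomir--Grimaldi treatment of the Wrona metric. Your geometric route therefore attempts strictly more than the paper does, and the obstacle you flag---that a nonzero $ie_1$-component of $\dot\gamma(0)$ escapes the $U(n-1)\times\langle K\rangle$ fixed-plane reduction---is a real one, not addressed in the paper. Your direct route sidesteps it cleanly; the paper sidesteps it by fiat.
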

\begin{proof}
The proof essentially goes along the same lines as that in \cite{SR}.
In fact, let $z,w\in\pmb{S}^{2n-1}$ such that $\langle z,w\rangle=0$ and write $\gamma(\tau)=z\cos\tau+w\sin\tau, 0\leq \tau\leq \alpha$. Set $\tau_i=\frac{\alpha}{m}i,0\leq i\leq m$. Then
\begin{eqnarray*}
\langle\gamma(\tau_{i+1}),\;\gamma(t_i)\rangle&=&\cos\frac{\alpha}{m},\quad \left\|\gamma(\tau_{i+1})-\gamma(\tau_i)\right\|^2=2\Big(1-\cos\frac{\alpha}{m}\Big)
\end{eqnarray*}
and
\begin{eqnarray*}
F(\gamma(\tau_i),\;\gamma(\tau_{i+1})-\gamma(\tau_i))
&=&\sqrt{\|\gamma(\tau_{i+1})-\gamma(\tau_i)\|^2\phi\left(1,\;\frac{|\langle \gamma(\tau_i), \gamma(\tau_{i+1})-\gamma(\tau_i)\rangle|^2}{\|\gamma(\tau_{i+1})-\gamma(\tau_i)\|^2}\right)}\\
&=&2\sin\frac{\alpha}{2m}\sqrt{\phi\Big(1,\;\sin^2\frac{\alpha}{2m}\Big)}.
\end{eqnarray*}
Consequently
\begin{eqnarray*}
L(\gamma)=\lim_{m\rightarrow \infty}\left[2m\sin\frac{\alpha}{2m}\sqrt{\phi\Big(1,\;\sin^2\frac{\alpha}{2m}\Big)}\right]=\alpha\phi(1,0)=\alpha,
\end{eqnarray*}
this completes the proof.
\end{proof}
\begin{remark}
It is obvious that the function $\phi(t,s)$ corresponding to the complex Wrona metric satisfies $\phi(1,0)=1$. On the other hand, any $U(n)$-invariant complex Finsler metric $F=\sqrt{r\phi(t,s)}$ defined on $\mathbb{C}^n$ can be normalized such that $\phi(1,0)=1$, since $F$  can multiplied by a positive constant $\frac{1}{\phi(1,0)}$ if necessary. We also point it out that the Bergman metric $F=\sqrt{r\phi(t,s)}$ with $\phi(t,s)=\frac{1}{1-t}+\frac{s}{(1-t)^2}$ defined on the open unit ball $\mathbb{B}^n\subset\mathbb{C}^n$ does not satisfy the condition in the above theorem since $\phi(1,0)=\infty$. Theorem \ref{th7} actually implies that the great circles are real geodesics of $F$ when restricted to the unit sphere $\pmb{S}^{2n-1}$ in $\mathbb{C}^n$.
\end{remark}
\vskip0.4cm
{\bf Acknowledgement:}\ {\small This work is supported by the National Natural Science Foundation of China (Grant No. 11671330), the Nanhu Scholars Program for Young Scholars of XYNU, and the Scientific Research Fund Program for Young Scholars of XYNU (No. 2017-QN-029).

\end{document}